\newtheorem{theorem}{Theorem}[section]
\newtheorem{problem}[theorem]{Problem}
\newtheorem{lemma}[theorem]{Lemma}
\newtheorem{proposition}[theorem]{Proposition}
\newtheorem{question}[theorem]{Question}
\newtheorem{corollary}[theorem]{Corollary}
\newtheorem{definition}[theorem]{Definition}
\newtheorem{notation}[theorem]{Notation}
\theoremstyle{definition}
\newtheorem{remark}[theorem]{Remark}
\title[]{Finitely additive functions in measure theory and applications}
\author[D. Alpay]{Daniel Alpay}
\address{(DA) Schmid College of Science and Technology \\
Chapman University\\
One University Drive
Orange, California 92866\\
USA}
\email{alpay@chapman.edu}
\author[P. Jorgensen]{Palle Jorgensen}
\address{(PJ)
Department of Mathematics, 14 MLH\\The University
of Iowa, Iowa City, Iowa 52242-1419\\ USA}
\email{palle-jorgensen@uiowa.edu}
\begin{document}
\maketitle

\begin{abstract}
  In this paper, we consider, and make precise, a certain extension of the Radon-Nikodym derivative operator, to functions which are additive, but  not necessarily sigma-additive, on a subset of a given sigma-algebra. We give applications to probability theory; in particular, to the study of $\mu$-Brownian motion,
  to stochastic calculus via generalized  It\^o-integrals, and their adjoints (in the form of generalized stochastic derivatives), to systems of transition probability operators indexed by families of measures $\mu$, and to adjoints of composition operators.
\end{abstract}

\noindent AMS Classification: Primary: 47B32, 60G20, 60G15, 60H05, 60J60. Secondary: 46E22.\\

\noindent{\em Keywords:}
Hilbert space, reproducing kernels, probability space, Gaussian fields, transforms, covariance, It\^o integration, It\^o calculus,
generalized Brownian motion.

\tableofcontents

\section{Introduction}
\setcounter{equation}{0}
Motivated by a diverse set of applications, in the context of probability theory, we present here a general result (Theorem \ref{mainth}) on finitely additive functions. We
demonstrate its implications for the study of a stochastic calculus based on generalized  It\^o-integrals, and generalized derivatives, for a prescribed systems of sigma-finite positive measures, see especially Theorems \ref{th5-1}, \ref{567} and \ref{th5-4} below. \smallskip

To provide motivation, consider the following example. Let $f\in\mathbf L^2(\mathbb R,\mathcal B,dx)\setminus\mathbf L^1(\mathbb R,\mathcal B,dx)$ (the classical Lebesgue spaces of the real line).
The function
\begin{equation}
  f(A)=\int_A f(x)dx
\end{equation}
is additive on the algebra of finite length (measurable) sets, but will not be sigma-additive since $f$ is not summable. The question
we address more generally is the following:

\begin{question} Given a measure space $(X,\mathcal F,\mu)$, where $\mu$ is sigma-finite,
define $\mathcal F^{\rm fin}$ to
be the family of sets of finite measure for $\mu$. 
The question is to give an intrinsic characterization of the functions of the form
\begin{equation}
    \label{MAMA}
M(A)=\int_A f(x)\mu(dx),\quad A\in \mathcal F^{\rm fin},
\end{equation}
where $f\in\mathbf L^2(X,\mathcal F,\mu)$.
\end{question}

We note that $\mathcal F^{\rm fin}$ generates $\mathcal F$ (in the sense that $\mathcal F$ is
the smallest sigma-algebra containing $\mathcal F^{\rm fin}$). This problem
was first suggested, and discussed briefly, by S.D. Chatterji in \cite{MR0448536}, in dealing with cases when the derivative need not be
assumed summable. The 
motivation in that paper was the theory of convergence of martingales. Our motivation comes from the
theory of composition operators. Consider a measure space $(X,\mathcal F)$, an endomorphism $\sigma$ of $X$, and a sigma-finite
measure $\mu$ which is $\sigma$-invariant:
\begin{equation}
  \label{sig-inv}
\mu\circ\sigma^{-1}=\mu,
\end{equation}
meaning that 
\[
\mu(A)=\mu(\sigma^{-1}(A)),\quad A\in\mathcal F.
\]
It follows that the composition map $S$:
\begin{equation}
  \label{comp-s}
f\mapsto f\circ \sigma
\end{equation}
is an isometry from $\mathbf L^p(X,\mathcal F,\mu)$ into itself for $p\in[1,\infty)$. As we will illustrate in Section \ref{S-*}
in the case $p=2$, the computation of the adjoint $S^*$ involves the extension of the Radon-Nikodym theorem considered here.\smallskip

The problem addressed in the present work is further motivated by a key idea from It\^o calculus; in particular, on the fact that the It\^o-integral is based on $L^2$
theory. Following for example \cite{MR1366434,MR2053326,MR3199302} one notes that the It\^o-integral takes the form of an isometry between
the respective $L^2$-spaces. This is true also for the extension of It\^o's theory which is based on a version of Brownian motion, or the
Wiener process, $W^{(\mu)}$  governed by an arbitrary sigma-finite measure $\mu$, as opposed to the more familiar case of Lebesgue measure;
see Section \ref{sec-4} below. Denoting by $V_\mu$  the It\^o-isometry calculated from  $W^{(\mu)}$, it is then natural to view the adjoint
operator $V_\mu^*$ (now a co-isometry) as a generalized derivative operator. But this entails a separate  $L^2$ approach for such a
generalized derivative; so one not relying on more familiar notions of Radon-Nikodym derivatives for $\mu$. Here we present such a
theory, accompanied with applications which in turn entail a new stochastic analysis based on families of sigma-finite measures, and
their associated It\^o-calculus.\smallskip

{\bf Overview.}The paper consists of five sections besides the introduction.  In Section \ref{sec-2} we review some properties of the reproducing kernel
Hilbert space with reproducing kernel $\mu(A\cap B)$, where $\mu$ is a sigma-finite measure and $A,B$ run in ${\mathcal F}^{\rm fin}$. The main result of the paper is proved in Section \ref{sec-3}. In the last three sections  we consider applications, to the $\mu$-Brownian motion, transition probability systems and adjoint of composition operators respectively.



\section{The reproducing kernel Hilbert space $\mathfrak H(\mu)$}
\setcounter{equation}{0}
\label{sec-2}
In preparation to Theorem \ref{mainth}, we recall the definition of the reproducing kernel Hilbert space associated to a sigma-finite measure and some of
its properties. We refer to  \cite{aj_opu, aj_jotp} for further details. With the notation of the introduction, we have
the following result, see \cite{aj_jotp}.

\begin{theorem}
  \label{MAMA-TH}  The function $K^{(\mu)}(A,B)=\mu(A\cap B)$ is positive definite on $\mathcal F^{\rm fin}$ and the associated reproducing kernel Hilbert space
  consists of the functions of the form \eqref{MAMA},
where $f\in\mathbf L^2(X,\mathcal F,\mu)$, with norm $\|M\|=\|f\|_2$ (where $\|f\|_2$ denotes the norm of $f$ in $\mathbf L^2(X,\mathcal F,\mu)$).
\end{theorem}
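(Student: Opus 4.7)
The plan is to realize $K^{(\mu)}$ as a Gram kernel in $\mathbf L^2(X,\mathcal F,\mu)$ and then transport the Hilbert-space structure via the standard correspondence for such kernels. The first step is to observe that, for $A,B\in\mathcal F^{\rm fin}$, the indicator $\mathbf 1_A$ belongs to $\mathbf L^2(X,\mathcal F,\mu)$ and
\[
K^{(\mu)}(A,B)=\mu(A\cap B)=\int_X \mathbf 1_A(x)\mathbf 1_B(x)\,\mu(dx)=\langle \mathbf 1_B,\mathbf 1_A\rangle_{\mathbf L^2(\mu)}.
\]
Positive definiteness is then immediate: for any $A_1,\ldots,A_n\in\mathcal F^{\rm fin}$ and scalars $c_1,\ldots,c_n$,
\[
\sum_{i,j=1}^n c_i\overline{c_j}\,K^{(\mu)}(A_i,A_j)=\Big\|\sum_{i=1}^n c_i\mathbf 1_{A_i}\Big\|_{\mathbf L^2(\mu)}^2\ge 0.
\]

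Next I would construct the linear map $T\colon \mathbf L^2(X,\mathcal F,\mu)\to \mathbb C^{\mathcal F^{\rm fin}}$ defined by
\[
(Tf)(A)=\int_A f(x)\,\mu(dx)=\langle f,\mathbf 1_A\rangle_{\mathbf L^2(\mu)},\qquad A\in\mathcal F^{\rm fin},
\]
so that in particular $T\mathbf 1_A(\cdot)=K^{(\mu)}(\cdot,A)$, i.e.\ $T$ sends indicators of finite-measure sets to the reproducing kernel sections. On the linear span $\mathcal D$ of such indicators inside $\mathbf L^2(\mu)$, the computation in the first paragraph shows that $T$ is isometric into the pre-RKHS $\mathcal H_0$ spanned by the sections $K^{(\mu)}(\cdot,A)$, and it maps $\mathcal D$ onto $\mathcal H_0$.

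The third step is to upgrade this isometry to all of $\mathbf L^2(\mu)$. Since $\mu$ is sigma-finite, $\mathcal D$ is dense in $\mathbf L^2(X,\mathcal F,\mu)$: any measurable function can be approximated by simple functions supported on sets of finite $\mu$-measure. Hence $T$ extends uniquely to an isometry from $\mathbf L^2(\mu)$ onto the completion of $\mathcal H_0$, which is precisely the reproducing kernel Hilbert space $\mathfrak H(\mu)$ associated with $K^{(\mu)}$. It remains to check that every element in this completion is still pointwise on $\mathcal F^{\rm fin}$ given by the formula $M(A)=\int_A f\,d\mu$; this follows from the reproducing property, since for the limit $M=Tf$ one has $M(A)=\langle M,K^{(\mu)}(\cdot,A)\rangle_{\mathfrak H(\mu)}=\langle f,\mathbf 1_A\rangle_{\mathbf L^2(\mu)}$.

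The main potential obstacle is the injectivity of $T$, i.e.\ showing that $f\in\mathbf L^2(\mu)$ with $\int_A f\,d\mu=0$ for all $A\in\mathcal F^{\rm fin}$ forces $f=0$ almost everywhere; this is again where sigma-finiteness enters, by exhausting $X$ with an increasing sequence of finite-measure sets and applying the standard uniqueness argument for signed measures. Once $T$ is known to be a unitary between $\mathbf L^2(\mu)$ and $\mathfrak H(\mu)$, the norm identity $\|M\|_{\mathfrak H(\mu)}=\|f\|_2$ is automatic, and the description of the elements of $\mathfrak H(\mu)$ as the functions $M$ of the form \eqref{MAMA} follows immediately.
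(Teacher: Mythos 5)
Your argument is correct. Note that the paper itself does not prove Theorem \ref{MAMA-TH}; it is quoted from the reference \cite{aj_jotp}, so there is no internal proof to compare against. What you give is the standard construction for a Gram-type kernel: realizing $\mu(A\cap B)=\langle \mathbf 1_B,\mathbf 1_A\rangle_{\mathbf L^2(\mu)}$ yields positive definiteness, the map $T$ sends indicators to kernel sections isometrically, sigma-finiteness gives density of the span of indicators of finite-measure sets in $\mathbf L^2(\mu)$, and completeness of $\mathbf L^2(\mu)$ forces $T$ to map onto the RKHS, with the pointwise formula $M(A)=\langle f,\mathbf 1_A\rangle_{\mathbf L^2(\mu)}$ recovered from the reproducing property. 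One small remark: the injectivity of $T$, which you flag as the main potential obstacle, is automatic once $T$ is known to be isometric on all of $\mathbf L^2(\mu)$ (an isometry has trivial kernel), so your separate exhaustion argument, while valid, is not needed; it only becomes relevant if one wants uniqueness of the representing $f$ proved independently of the isometry.
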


\begin{definition} {\rm 
We denote by $\mathfrak H(\mu)$ the reproducing kernel Hilbert space with reproducing kernel $\mu(A\cap B)$, $A,B\in\mathcal F^{\rm fin}$.}
  \end{definition}
It follows from \eqref{MAMA} that $M(A)=0$ when $\mu(A)=0$. When $\mu(X)<\infty$, $M$ is a signed measure and the function $f$ is equal to
the Radon-Nikodym derivative $\frac{{\rm d}M}{{\rm d}\mu}$. This latter interpretation fails when $\mu$ is not finite, and we will see in
Theorem \ref{mainth} another characterization of $M$.\\

The following lemma will be used in the proof of Theorem \ref{mainth}.

\begin{lemma}
  \label{Iowa-city-0502-2022}
  Let $B_1,\ldots, B_J\in\mathcal F^{\rm fin}$, and let $c_1,\ldots, c_J\in\mathbb C$.
 The sum $ \sum_{j=1}^J b_j1_{B_j}$ can be rewritten in the form $\sum_{n=1}^{N} a_n1_{A_n}$ where the sets
$A_n$ are pairwise disjoint.
\end{lemma}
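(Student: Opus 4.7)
The plan is to work with the atoms of the finite Boolean algebra generated by the family $\{B_1,\dots,B_J\}$ inside $\mathcal F$. Since the family is finite, this algebra has at most $2^J$ atoms, each naturally indexed by a subset $S\subseteq\{1,\dots,J\}$ recording precisely which of the $B_j$ it is contained in. On any single atom, each indicator $1_{B_j}$ is constant, so the whole sum collapses to a single complex number; this will give the required disjoint representation.

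Concretely, for each non-empty $S\subseteq\{1,\dots,J\}$ I would set
$$A_S \;:=\; \bigcap_{j\in S} B_j \;\setminus\; \bigcup_{j\notin S} B_j,$$
and discard those $S$ for which $A_S=\varnothing$. The sets so obtained are pairwise disjoint by construction, because two distinct index sets $S\neq S'$ differ on some $j$, and then $A_S$ and $A_{S'}$ lie on opposite sides of $B_j$. Moreover, because $S$ is non-empty, $A_S$ is contained in $B_j$ for any $j\in S$, hence $A_S\in\mathcal F^{\rm fin}$. Observing that $1_{B_j}(x)=1$ if and only if $j\in S$ when $x\in A_S$, I set $a_S:=\sum_{j\in S} b_j$, enumerate the non-empty $A_S$ as $A_1,\dots,A_N$ with corresponding coefficients $a_1,\dots,a_N$, and obtain
$$\sum_{j=1}^J b_j\, 1_{B_j} \;=\; \sum_{n=1}^N a_n\, 1_{A_n},$$
the identity holding pointwise since both sides vanish outside $\bigcup_j B_j=\bigsqcup_n A_n$.

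There is no substantive obstacle. The only point that merits attention is restricting to non-empty $S$, which is precisely what keeps the atoms inside $\mathcal F^{\rm fin}$; including $S=\varnothing$ would produce the set $X\setminus\bigcup_j B_j$, which can have infinite $\mu$-measure and is unnecessary anyway since the summand there is zero. An alternative, equally routine route is induction on $J$: given a disjoint representation for $\sum_{j<J} b_j 1_{B_j}$, split each existing atom $A_n$ into $A_n\cap B_J$ and $A_n\setminus B_J$, and add $B_J\setminus\bigcup_n A_n$ with coefficient $b_J$. Either approach settles the lemma.
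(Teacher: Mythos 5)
Your proof is correct, and your primary route differs from the paper's. The paper argues by induction on $J$: given a disjoint representation $\sum_{n} a_n 1_{A_n}$ of the first $J$ terms, it splits each $A_n$ into $A_n\cap B_{J+1}$ and $A_n\setminus B_{J+1}$ and adjoins $B_{J+1}\setminus\bigcup_n A_n$ with coefficient $b_{J+1}$ --- exactly the ``alternative, equally routine route'' you sketch at the end. Your main argument instead produces the decomposition in closed form via the atoms $A_S=\bigcap_{j\in S}B_j\setminus\bigcup_{j\notin S}B_j$ of the Boolean algebra generated by $B_1,\dots,B_J$, with explicit coefficients $a_S=\sum_{j\in S}b_j$. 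What this buys is transparency: the disjointness, the pointwise identity, and --- a point the paper's proof leaves implicit --- the fact that each $A_S$ lies in $\mathcal F^{\rm fin}$ (since a nonempty $S$ forces $A_S\subseteq B_j$ for some $j$) are all immediate, and your remark about excluding $S=\varnothing$ correctly isolates the only place where finiteness of measure could fail. The induction buys a shorter write-up with only linear bookkeeping per step rather than the (harmless, since never used quantitatively) exponential count of at most $2^J-1$ atoms. Either argument fully establishes the lemma as it is used in Theorem \ref{mainth}.
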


\begin{proof} The proof is a repeated use of the formula $A=(A\setminus B)\cup (A\cap B)$. We use induction. For $N=2$, one writes
  \[
    \begin{split}
      b_11_{B_1}+b_21_{B_2}&=c_1\left(1_{B_1\setminus B_2}+1_{B_1\cap B_2}\right)+c_2\left(1_{B_2\setminus B_1}+1_{B_2\cap B_1}\right)\\
      &=c_11_{B_1\setminus B_2}+c_21_{B_2\cap B_1}+(c_1+c_2)1_{B_1\cap B_2}.
  \end{split}
\]
Assuming the result true at rank $J$ we have
\[
\begin{split}
  \sum_{j=1}^{J+1} b_j1_{B_j}&=\sum_{m=1}^{J} b_j1_{B_j}+b_{J+1}1_{B_{J+1}}\\
  &=\sum_{n=1}^Na_n1_{A_n}+b_{J+1}1_{B_{J+1}}\\
  &=\sum_{n=1}^Na_n1_{A_n\setminus B_{J+1}}+\sum_{n=1}^Na_n1_{A_n\cap B_{J+1}}+b_{J+1}1_{B_{J+1}\setminus\cup_{n=1}^N A_n}+b_{J+1}1_{B_{J+1}\cap \left(\cup_{n=1}^N A_n\right)}\\
  &=\sum_{n=1}^Na_n1_{A_n\setminus B_{J+1}}+\sum_{n=1}^Na_n1_{A_n\cap B_{J+1}}+b_{J+1}1_{B_{J+1}\setminus\cup_{n=1}^N A_n}+\\
  &\hspace{5mm}+\sum_{n=1}^Nb_{J+1}1_{B_{J+1}\cap A_n}\\
  &=\sum_{n=1}^Na_n1_{A_n\setminus B_{J+1}}+\sum_{n=1}^N(a_n+b_{J+1})1_{A_n\cap B_{J+1}}+b_{J+1}1_{B_{J+1}\setminus\cup_{n=1}^N A_n}.
  \end{split}
\]  
\end{proof}

The following result is a special case of the characterization of the elements of a reproducing kernel Hilbert space; see e.g. \cite{aron}.

\begin{theorem}
  \label{ineq-90}
  A function $M$ defined on $\mathcal F^{\rm fin}$ belongs to $\mathfrak H(\mu)$ with norm less or equal to $\sqrt{C}$ if and only if the kernel
  \[
\mu(A\cap B)-\frac{1}{C}M(A)\overline{M(B)}
\]
is positive definite on $\mathcal F^{\rm fin}$.
  \end{theorem}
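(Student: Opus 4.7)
The result is a standard Aronszajn-type characterization of RKHS membership via positive definiteness of a difference kernel; the proof uses only the reproducing property $M(A)=\langle M,K^{(\mu)}(\cdot,A)\rangle_{\mathfrak H(\mu)}$ and its consequence $\langle K^{(\mu)}(\cdot,A),K^{(\mu)}(\cdot,B)\rangle=\mu(A\cap B)$ from Theorem \ref{MAMA-TH}. The direct implication is Cauchy--Schwarz, and the converse is Riesz representation applied to a suitable functional on the dense span $\mathcal D=\mathrm{span}\{K^{(\mu)}(\cdot,A):A\in\mathcal F^{\rm fin}\}\subset\mathfrak H(\mu)$.

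\smallskip

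\emph{Necessity.} Given $A_1,\dots,A_n\in\mathcal F^{\rm fin}$ and $c_1,\dots,c_n\in\mathbb C$, set $g=\sum_i\bar c_iK^{(\mu)}(\cdot,A_i)$. The two identities above give $\|g\|^2=\sum_{i,j}c_i\bar c_j\,\mu(A_i\cap A_j)$ (using symmetry of $\cap$) and $\langle M,g\rangle=\sum_ic_iM(A_i)$, so Cauchy--Schwarz together with $\|M\|^2\le C$ yields
\[
\Bigl|\sum_ic_iM(A_i)\Bigr|^2\le C\sum_{i,j}c_i\bar c_j\,\mu(A_i\cap A_j),
\]
which is exactly the required positive definiteness of $\mu(A\cap B)-\tfrac1C M(A)\overline{M(B)}$.

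\smallskip

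\emph{Sufficiency.} The positive definiteness hypothesis is precisely the inequality $|\sum_ic_iM(A_i)|^2\le C\|g\|^2$ for every $g=\sum_i\bar c_iK^{(\mu)}(\cdot,A_i)\in\mathcal D$. The assignment $\Lambda:g\mapsto\sum_i\bar c_i\overline{M(A_i)}$ is therefore a well defined antilinear functional on $\mathcal D$ (if $g=0$ then the inequality forces $\Lambda(g)=0$), bounded of norm $\le\sqrt C$. Extending $\Lambda$ by density to $\mathfrak H(\mu)$ and invoking the antilinear Riesz representation theorem produces $h\in\mathfrak H(\mu)$ with $\|h\|\le\sqrt C$ and $\Lambda(g)=\langle h,g\rangle$; choosing $g=K^{(\mu)}(\cdot,A)$ and applying the reproducing property gives $\overline{M(A)}=\overline{h(A)}$, so $M=h\in\mathfrak H(\mu)$ with the asserted norm bound. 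The only real obstacle is the conjugation bookkeeping at the Riesz step: using the antilinear variant of $\Lambda$ (rather than the obvious linear $g\mapsto\sum_ic_iM(A_i)$, which would identify $M$ with the pointwise conjugate of the Riesz representer) allows one to recover $M$ itself, so no auxiliary appeal to the $\mathbf L^2$-realization of $\mathfrak H(\mu)$ is needed.
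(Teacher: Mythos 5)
Your proof is correct and is exactly the standard Aronszajn-type argument (Cauchy--Schwarz for necessity; a bounded functional on the dense span of the kernel sections plus Riesz representation for sufficiency), which is also the route the paper takes implicitly: it gives no proof of its own and simply cites \cite{aron} for this characterization. One small label to fix: with the convention $M(A)=\langle M,K^{(\mu)}(\cdot,A)\rangle$ your functional $\Lambda$ is actually linear in $g$ (antilinear only in the coefficients $c_i$), so the ordinary Riesz theorem with $\Lambda(g)=\langle g,h\rangle$ is what you are using, but your concluding identity $\overline{M(A)}=\overline{h(A)}$, hence $M=h\in\mathfrak H(\mu)$ with $\|M\|\le\sqrt{C}$, goes through unchanged.
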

\section{The main result}
\setcounter{equation}{0}
\label{sec-3}
Let as above $\mathcal F$ be a sigma-algebra on a
set $X$, let $\mu$ be a sigma-finite measure, and let $\mathcal F^{\rm fin}$ denote the sets of finite measure for $\mu$.
In \cite{MR0448536} the following problem was considered:

\begin{problem} {\rm
Given is a complex-valued additive function $M$
on $\mathcal F^{\rm fin}$ which is absolutely continuous with respect to $\mu$ in the sense that:
\[
\forall A\in \mathcal B^{\rm fin},\quad \mu(A)=0\,\,\Longrightarrow M(A)=0.
\]
The problem was to characterize $M$ in alternative ways.}
\end{problem}

\begin{theorem}
  \label{mainth}
  The following three conditions are equivalent:\smallskip

  $(1)$ There exists $h\in\mathbf L^2(X,\mathcal F,\mu)$ such that
  \begin{equation}
    \label{KF-deriv}
M(A)=\int_Ah(x)\mu(dx),\quad A\in\mathcal F^{\rm fin}.
   \end{equation} 
   $(2)$ There exists a constant $C<\infty$ such that, for every $N\in\mathbb N$ and every family $A_1,\ldots, A_N$ of pairwise disjoint
   elements in $\mathcal F^{\rm fin}$, it holds that
   \begin{equation}
     \label{new-cond}
\sum_{n=1}^N\frac{|M(A_n)|^2}{\mu(A_n)}\le C.
\end{equation}
$(3)$ $M\in\mathfrak H(\mu)$.
\end{theorem}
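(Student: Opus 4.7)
The plan is to reduce the statement to two earlier results. Theorem \ref{MAMA-TH} identifies $\mathfrak H(\mu)$ precisely with the space of functions of the form \eqref{MAMA}, so $(1)\Leftrightarrow(3)$ is immediate. The substantive content is $(2)\Leftrightarrow(3)$, which I handle by verifying the positive-definiteness criterion of Theorem \ref{ineq-90}: $M\in\mathfrak H(\mu)$ with $\|M\|^{2}\le C$ if and only if
\[
K_{C}(A,B)=\mu(A\cap B)-\tfrac{1}{C}M(A)\overline{M(B)}
\]
is positive definite on $\mathcal F^{\rm fin}$.

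For $(3)\Rightarrow(2)$, fix disjoint $A_1,\dots,A_N\in\mathcal F^{\rm fin}$, discarding any with $\mu(A_n)=0$ (for those $n$, $M(A_n)=0$ since elements of $\mathfrak H(\mu)$ vanish on null sets, as noted after Theorem \ref{MAMA-TH}). The Gram matrix of $K_C$ on these sets is $\mathrm{diag}(\mu(A_n))-\tfrac{1}{C}[M(A_m)\overline{M(A_n)}]$, positive semidefinite by Theorem \ref{ineq-90}. Testing against the vector $w_n=M(A_n)/\mu(A_n)$ and setting $S=\sum_n|M(A_n)|^{2}/\mu(A_n)$ yields $S-\tfrac{1}{C}S^{2}\ge 0$, i.e.\ $S\le C$.

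For $(2)\Rightarrow(3)$, take arbitrary $B_1,\dots,B_J\in\mathcal F^{\rm fin}$ and $c_1,\dots,c_J\in\mathbb C$; I must show
\[
Q:=\sum_{i,j=1}^{J}c_i\overline{c_j}\Bigl[\mu(B_i\cap B_j)-\tfrac{1}{C}M(B_i)\overline{M(B_j)}\Bigr]\ge 0.
\]
Invoking Lemma \ref{Iowa-city-0502-2022} (applied to the atoms of the finite algebra generated by the $B_i$), produce pairwise disjoint $A_1,\dots,A_N\in\mathcal F^{\rm fin}$ and index sets $I_i\subseteq\{1,\dots,N\}$ with $B_i=\bigsqcup_{n\in I_i}A_n$. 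Finite additivity of $M$ and of $\mu$ on disjoint unions gives $M(B_i)=\sum_{n\in I_i}M(A_n)$ and $\mu(B_i\cap B_j)=\sum_{n\in I_i\cap I_j}\mu(A_n)$, so putting $d_n=\sum_{i\,:\,n\in I_i}c_i$ collapses the double sum to
\[
Q=\sum_{n=1}^{N}\mu(A_n)|d_n|^{2}-\tfrac{1}{C}\Bigl|\sum_{n=1}^{N}d_n M(A_n)\Bigr|^{2}.
\]
Writing $d_nM(A_n)=\bigl(d_n\sqrt{\mu(A_n)}\bigr)\cdot\bigl(M(A_n)/\sqrt{\mu(A_n)}\bigr)$ (terms with $\mu(A_n)=0$ drop, using that (2) forces $M(A_n)=0$ there), Cauchy--Schwarz followed by hypothesis (2) applied to the disjoint family $\{A_n\}$ bounds the second term by $\sum_n\mu(A_n)|d_n|^{2}$, so $Q\ge 0$. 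Theorem \ref{ineq-90} then places $M$ in $\mathfrak H(\mu)$, with $\|M\|^{2}\le C$.

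The one delicate manoeuvre is the passage in $(2)\Rightarrow(3)$ from an arbitrary tuple of sets to a disjoint refinement on which hypothesis (2) is directly usable; Lemma \ref{Iowa-city-0502-2022} is tailored for exactly this, after which Cauchy--Schwarz realigns the quadratic form with hypothesis (2). Absolute continuity, needed to handle $\mu(A_n)=0$, is built into each of the three conditions and requires no separate verification.
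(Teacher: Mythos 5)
Your argument is correct, and it is organized differently from the paper's. The paper proves the cycle $(1)\Rightarrow(2)\Rightarrow(3)\Rightarrow(1)$: it gets $(1)\Rightarrow(2)$ by Cauchy--Schwarz applied to $M(A)=\int_A h\,d\mu$, gets $(2)\Rightarrow(3)$ by exactly the mechanism you use (Lemma \ref{Iowa-city-0502-2022} to pass to a disjoint family, Cauchy--Schwarz against hypothesis $(2)$, then Theorem \ref{ineq-90}), and quotes Theorem \ref{MAMA-TH} only for $(3)\Rightarrow(1)$. You instead take $(1)\Leftrightarrow(3)$ wholesale from Theorem \ref{MAMA-TH} and prove $(2)\Leftrightarrow(3)$ through the positive-definiteness criterion in both directions; in particular your $(3)\Rightarrow(2)$ (testing the Gram matrix of $K_C$ on a disjoint family, with $C=\|M\|_{\mathfrak H(\mu)}^2$) has no counterpart in the paper, which never needs that implication. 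Your version of $(2)\Rightarrow(3)$ is in fact more explicit than the paper's: you spell out that finite additivity of $M$ and $\mu$ over the disjoint refinement identifies the quadratic form in the $B_i$'s with the one in the $A_n$'s (via $d_n=\sum_{i:\,n\in I_i}c_i$), a bookkeeping step the paper leaves implicit when it replaces $\sum_j b_j1_{B_j}$ by $\sum_n a_n1_{A_n}$. What your route buys is symmetry and a sharper statement (the constant in $(2)$ can be taken to be $\|M\|_{\mathfrak H(\mu)}^2$, and conversely $\|M\|^2\le C$); what the paper's route buys is that each implication is used only once and $(1)\Rightarrow(2)$ is a one-line integral estimate. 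One trivial adjustment: in $(3)\Rightarrow(2)$, with the convention that positive definiteness means $\sum_{m,n}c_m\overline{c_n}K_C(A_m,A_n)\ge 0$, the test vector should be $c_n=\overline{M(A_n)}/\mu(A_n)$ rather than $M(A_n)/\mu(A_n)$; otherwise the cross term is $\bigl|\sum_n M(A_n)^2/\mu(A_n)\bigr|^2$ and not $S^2$. With that one-character fix the inequality $S-\tfrac1C S^2\ge0$, hence $S\le C$, follows as you state.
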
  

The proof of the equivalence between $(1)$ and $(2)$ is outlined in \cite[Theorem 6, p. 17]{MR0448536} for $p\in(1,\infty)$. Here, for $p=2$, we prove this equivalence and add the equivalence with $(3)$.
The difference with the arguments in \cite[pp. 17-18]{MR0448536} is that we do not prove directly that $(2)$ implies $(1)$, but that $(2)$ implies $(3)$, and then prove that $(3)$ implies $(1)$.
The case of general $p\in(1,\infty)$ is recalled below (see Theorem \ref{mainth2}).
\begin{proof}[Proof of Theorem \ref{mainth}]
  We first show that $(1)$ implies $(2)$. Assume $(1)$ holds.
  Let $A\in\mathcal F^{\rm fin}$. By Cauchy-Schwarz inequality we have
  \[
    \begin{split}
      |M(A)|^2&=|\int_X 1_A(x)(1_A(x)h(x))\mu(dx)|^2\\
     & \le \left(\int_X1^2_A(x)\mu(dx)\right)\left(\int_X1^2_A(x)|h(x)|^2\mu(dx)\right)=\mu(A)\int_A|h(x)|^2\mu(dx).
      \end{split}
    \]
    Thus, for $A_1,\ldots, A_N$ pairwise disjoint elements of $\mathcal F^{\rm fin}$, we can write:
    \[
      \sum_{n=1}^N\frac{|M(A_n)|^2}{\mu(A_n)}\le \sum_{n=1}^N\int_{A_n}|h(x)|^2dx\le\int_X|h(x)|^2dx,
    \]
    so that $(2)$ holds. Assuming $(2)$, in order to prove $(3)$ we will show that the kernel
    \[
\mu(A\cap B)-\frac{M(A)\overline{M(B)}}{C}
\]
is positive definite on $\mathcal F^{\rm fin}$ for $C=\|h\|_2^2$.
Let $b_1,\ldots, b_J$
be complex numbers and $B_1,\ldots, B_J$ be in $\mathcal F^{\rm fin}$. Using Lemma \ref{Iowa-city-0502-2022} we rewrite
\[
\sum_{j=1}^Jb_j1_{B_j}=\sum_{n=1}^Na_nA_n
\]
where $a_1,\ldots, a_N\in\mathbb C$ and now the $A_n$ are pairwise disjoint. By Cauchy-Schwarz inequality,
\[
  \begin{split}
    \frac{1}{C}\left|\sum_{n=1}^N a_nM(A_n)\right|^2
        &=\frac{1}{C}\left|\sum_{n=1}^N \frac{M(A_n)}{\sqrt{\mu(A_n)}}a_n\sqrt{\mu(A_n)}\right|^2\\
        &\le      \frac{1}{C}\left(\sum_{n=1}^N\frac{M(A_n)|^2}{\mu(A_n)}\right)\left(\sum_{n=1}^N|a_n|^2\mu(A_n)\right)\\
        &\le\sum_{\ell, n=1}^Na_\ell\overline{a_n}\mu(A_\ell\cap A_n)
    \end{split}
  \]
  since the $A_n$ are pairwise disjoint. By Theorem \ref{ineq-90}, the function $A\mapsto M(A)$ belongs to $\mathfrak H(\mu)$ with norm at least
  $C$. The fact that $(3)$ implies $(1)$ forms the content of Theorem \ref{MAMA-TH}.
  \end{proof}

   \begin{definition}{\rm
       We will use the notation $\nabla_\mu$ for        the map which to $M\in\mathfrak H(\mu)$ associates $h$ as
    in \eqref{KF-deriv}, and call it the Krein-Feller derivative. We can therefore rewrite \eqref{KF-deriv} as
    \begin{equation}
      \label{KF-deriv-2}
      M(A)=\int_A(\nabla_{\mu}M)(x)\mu(dx),\quad A\in\mathcal F^{\rm fin}.
      \end{equation}}
    \end{definition}

    Our motivation for this terminology comes  from analysis on fractals, where various variants of the operator of differentiation by $\mu$ appear in the
    theory of the Krein-Feller diffusion. The generator of the Krein-Feller diffusion is a variant of  $L:= \frac{d^2}{ dx d\mu}$. See for instance
    \cite{MR2213587,MR0091556,MR2787628,MR4295177,MR4048458}. See also \cite{Dmk}.\\
    
    As a consequence of Theorem \ref{mainth} we have:

    \begin{corollary}
      The map $\nabla_\mu$ is unitary from $\mathfrak H(\mu)$ onto $\mathbf L^2(X,\mathcal F,\mu)$, with adjoint given by
      \begin{equation}
        (\nabla_\mu^*g)(A)=\int_Ag(x)\mu(dx),\quad A\in\mathcal F^{\rm fin},\,\, g\in \mathbf L^2(X,\mathcal F,\mu).
        \label{adjoint-nabla}
        \end{equation}
      \end{corollary}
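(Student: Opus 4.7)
The plan is to derive everything from Theorem~\ref{mainth} together with Theorem~\ref{MAMA-TH}, which already supply the crucial well-definedness, isometry, and surjectivity statements; the corollary is essentially a packaging of these into operator-theoretic language.

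First I would verify that $\nabla_\mu$ is a well-defined linear map. Given $M\in\mathfrak{H}(\mu)$, the equivalence $(3)\Rightarrow(1)$ in Theorem~\ref{mainth} produces some $h\in\mathbf L^2(X,\mathcal F,\mu)$ satisfying \eqref{KF-deriv}, and I would need to note that $h$ is unique in $\mathbf L^2$: if $h_1,h_2$ both represent $M$, then $\int_A (h_1-h_2)\,d\mu=0$ for every $A\in\mathcal F^{\rm fin}$, and $\sigma$-finiteness of $\mu$ (so that $\mathcal F^{\rm fin}$ generates $\mathcal F$) forces $h_1=h_2$ $\mu$-a.e. Linearity of $\nabla_\mu$ is then immediate from linearity of the integral.

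Next I would show $\nabla_\mu$ is an isometry and surjective. The isometry $\|\nabla_\mu M\|_{\mathbf L^2}=\|M\|_{\mathfrak H(\mu)}$ is precisely the norm identity $\|M\|=\|f\|_2$ from Theorem~\ref{MAMA-TH}. For surjectivity, given $g\in\mathbf L^2(X,\mathcal F,\mu)$, define $M(A):=\int_A g(x)\mu(dx)$ for $A\in\mathcal F^{\rm fin}$; Theorem~\ref{MAMA-TH} says $M\in\mathfrak H(\mu)$, and by construction $\nabla_\mu M=g$. Hence $\nabla_\mu$ is a surjective linear isometry between two Hilbert spaces, and therefore unitary.

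Finally, the adjoint formula \eqref{adjoint-nabla} follows for free: since $\nabla_\mu$ is unitary, $\nabla_\mu^{\,*}=\nabla_\mu^{-1}$, and the surjectivity step above already identified the preimage of $g$ as the function $A\mapsto\int_A g\,d\mu$. I do not foresee a serious obstacle here; the only point worth being careful about is the a.e.\ uniqueness of the Krein--Feller derivative, so that $\nabla_\mu$ is actually a function (not merely a relation) and the adjoint identification is unambiguous.
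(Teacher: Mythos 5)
Your proposal is correct and follows essentially the same route as the paper: both arguments are direct consequences of Theorem \ref{MAMA-TH}, which already gives that the integration map $g\mapsto\int_{(\cdot)}g\,d\mu$ is a surjective isometry between $\mathbf L^2(X,\mathcal F,\mu)$ and $\mathfrak H(\mu)$. The only cosmetic difference is that the paper identifies this map as $\nabla_\mu^*$ by checking the inner-product identity $\langle I_\mu g,M\rangle_{\mathfrak H(\mu)}=\langle g,\nabla_\mu M\rangle_\mu$, whereas you invoke that the adjoint of a unitary is its inverse; your explicit remark on the $\mu$-a.e.\ uniqueness of the Krein--Feller derivative (via $\sigma$-finiteness) is a point the paper leaves implicit and is worth making.
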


      \begin{proof}
        By Theorem \ref{MAMA-TH} the map \eqref{adjoint-nabla} is unitary from $\mathbf L^2(X,\mathcal F,\mu)$ onto $\mathfrak H(\mu)$. Denoting
        temporarily this map by $I_\mu$, we take $M\in\mathfrak H(\mu)$ of the form $M(A)=\int_Ah(x)\mu(dx)$ (with $h\in \mathbf L^2(X,\mathcal F,\mu)$).
The fact that $I_\mu=\nabla_\mu^*$ follows from
        \[
          \begin{split}
            \langle I_\mu g, M\rangle_{\mathfrak H(\mu)}=\langle g,h\rangle_{\mu}=\langle g,\nabla_\mu M\rangle_{\mu},
            \quad g\in \mathbf L^2(X,\mathcal F,\mu).
\end{split}
\]

        \end{proof}
    
        In the general case where $p\in(1,\infty)$ the two first items in Theorem \ref{mainth} are still equivalent, as we now prove; we follow, with a bit more details, the arguments in
        \cite[pp. 17-18]{MR0448536}. One could replace the third
      condition in Theorem \ref{mainth} by introducing pairs of spaces in duality (see \cite{Aronszajn60,aro3,a-nlsa} for the latter), but this will not be done here.

  \begin{theorem} (the case $p\in(1,\infty)$)
\label{mainth2}
  The following are equivalent:\smallskip

  $(1)$ There exists $h\in\mathbf L^p(X,\mathcal F,\mu)$ such that
  \begin{equation}
    \label{KF-deriv-p}
M(A)=\int_Ah(x)\mu(dx),\quad A\in\mathcal F^{\rm fin}.
   \end{equation} 
   $(2)$ There exists a constant $C<\infty$ such that, for every $N\in\mathbb N$ and every family $A_1,\ldots, A_N$ of pairwise disjoint
   elements in $\mathcal F^{\rm fin}$, it holds that
   \begin{equation}
     \label{new-cond-p}
\sum_{n=1}^N\frac{|M(A_n)|^p}{(\mu(A_n))^{p-1}}\le C.
\end{equation}
  \end{theorem}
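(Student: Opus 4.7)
The plan is to establish the equivalence $(1) \Leftrightarrow (2)$ via a duality argument using the conjugate exponent $q = p/(p-1)$, combining H\"older's inequality with the Riesz representation of the dual of $L^q(\mu)$.

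For $(1) \Rightarrow (2)$, I would apply H\"older's inequality directly to $M(A) = \int 1_A \cdot h \, d\mu$ (with exponents $q$ and $p$) to obtain $|M(A)|^p \le \mu(A)^{p-1} \int_A |h|^p \, d\mu$. Summing over pairwise disjoint $A_1, \ldots, A_N \in \mathcal{F}^{\rm fin}$ then yields $\sum_n |M(A_n)|^p / \mu(A_n)^{p-1} \le \|h\|_p^p$, so the constant $C = \|h\|_p^p$ works.

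For the harder direction $(2) \Rightarrow (1)$, I would construct a bounded linear functional on the space $\mathcal{S}_{\rm fin}$ of simple functions supported on sets of finite $\mu$-measure. By Lemma \ref{Iowa-city-0502-2022}, every $\phi \in \mathcal{S}_{\rm fin}$ has a representation $\phi = \sum_n a_n 1_{A_n}$ with pairwise disjoint $A_n \in \mathcal{F}^{\rm fin}$, and I set $L(\phi) := \sum_n a_n M(A_n)$. Well-definedness is checked by passing to a common refinement of any two disjoint representations and using the finite additivity of $M$, together with the hypothesis that $M$ vanishes on $\mu$-null sets. To bound $|L(\phi)|$, I split each summand as $a_n M(A_n) = \bigl(a_n \mu(A_n)^{1/q}\bigr)\bigl(M(A_n)/\mu(A_n)^{1/q}\bigr)$ and apply H\"older's inequality with exponents $q$ and $p$:
\[
|L(\phi)| \le \left(\sum_n |a_n|^q \mu(A_n)\right)^{1/q} \left(\sum_n \frac{|M(A_n)|^p}{\mu(A_n)^{p-1}}\right)^{1/p} \le C^{1/p}\,\|\phi\|_q,
\]
where the first factor equals $\|\phi\|_q$ by disjointness of the $A_n$ and the second is bounded by $C^{1/p}$ by hypothesis $(2)$, using $p/q = p-1$.

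Since $\mathcal{S}_{\rm fin}$ is dense in $L^q(X,\mathcal F,\mu)$ for $q \in [1,\infty)$ (using $\sigma$-finiteness of $\mu$), $L$ extends by continuity to a bounded linear functional on $L^q(\mu)$. For $q \in (1,\infty)$, the Riesz representation theorem identifies this functional with some $h \in L^p(X,\mathcal F,\mu)$, giving $L(\phi) = \int \phi \cdot h \, d\mu$; specializing to $\phi = 1_A$ for $A \in \mathcal{F}^{\rm fin}$ recovers \eqref{KF-deriv-p}. The main subtlety I expect is the careful handling of terms with $\mu(A_n) = 0$ in the disjoint decomposition: here the absolute continuity assumption $\mu(A) = 0 \Rightarrow M(A) = 0$ makes both the sum in $(2)$ (under the convention $|M(A_n)|^p/\mu(A_n)^{p-1} := 0$) and the definition of $L$ unambiguous. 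Once this is in place, the remaining ingredients --- density of $\mathcal{S}_{\rm fin}$ in $L^q$ and the Riesz representation for $q \in (1,\infty)$ --- are standard.
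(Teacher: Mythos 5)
Your proposal is correct and follows essentially the same route as the paper: Hölder's inequality for $(1)\Rightarrow(2)$, and for $(2)\Rightarrow(1)$ a linear functional on simple functions over disjoint sets of finite measure, bounded via Hölder by $C^{1/p}\|\cdot\|_q$, extended by density to $\mathbf L^q(\mu)$ and represented by Riesz duality. You add some welcome rigor the paper glosses over (well-definedness of the functional via common refinements, and the convention for terms with $\mu(A_n)=0$ using absolute continuity), but the argument is the same.
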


  \begin{proof}
    The proof follows the proof of Theorem \ref{mainth}, but now uses H\"{o}lder inequality. Assume first that $(1)$ is in force. We have for $A\in\mathcal F^{\rm fin}$:
    \[
      \begin{split}
        |M(A)|&\le\int_X1_A(x)1_A(x)|h(x)|\mu(dx)\\
        &\le\left(\int_X(1_A(x))^q\mu(dx)\right)^{1/q}\left(\int_X(1_A(x))^p|h(x)|^p\mu(dx)\right)^{1/p}\\
        &\le (\mu(A))^{1/q}\left(\int_A|h(x)|^p\mu(dx)\right)^{1/p}.
        \end{split}
      \]
      Since $p/q=p-1$ we have
      \[
        |M(A)|^p\le(\mu(A))^{p-1}\int_A|h(x)|^p\mu(dx).
      \]
      Hence, for $A_1,\ldots, A_N$ pairwise disjoint elements of $\mathcal F^{\rm fin}$ we have:
      \[
\sum_{n=1}^N\frac{|M(A_n)|^p}{(\mu(A_n))^{p-1}}=\int_{\cup_{n=1}^NA_n}|h(x)|^p\mu(dx)\le\int_X|h(x)|^p\mu(dx).
        \]
    Assume now that \eqref{new-cond-p} is in force and define a map
   on the linear span of the functions $1_{A}$, $A\in\mathcal F^{\rm fin}$, by
    \[
      \varphi(f)      =\sum_{n=1}^Nc_nM(A_n),
    \]
    where $f=\sum_{n=1}^Nc_n1_{A_n}$, the sets $A_1,\ldots, A_N$ being moreover pairwise disjoint. Then, by H\"{o}lder's inequality (and with $1/p+1/q=1$)
    \[
      \begin{split}
        |\varphi(f)|&\le\sum_{n=1}^N|c_n|\mu(A_n)^{1/q}\frac{|M(A_n)|}{(\mu(A_n))^{1/q}}\\
          &= \left(\sum_{n=1}^N|c_n|^q\mu(A_n)\right)^{1/q}\left(\sum_{n=1}^N\frac{|M(A_n)|^p}{(\mu(A_n))^{p-1}}\right)^{1/p}\\
              &\le C \left(\sum_{n=1}^N|c_n|^q\mu(A_n)\right)^{1/q}\\
            &=C\left(\int_X|f(x)|^q\mu(dx)\right)^{1/q}
          \end{split}
        \]
        since $p/q=p-1$ Hence $\varphi$ extends to a continuous functional on $\mathbf L^q(X,\mathcal F,\mu)$.
        The claim follows then from Riesz theorem.    \end{proof}

      As a corollary we have:

      \begin{theorem} Let $p\in(1,\infty)$.
        A function $f$ defined on the real line is of the form $f(x)=\int_0^xg(u)du$ where $g\in\mathbf L^p(\mathbb R,\mathcal B,du)$ (the Borel sets and the
        Lebesgue measure) if and only if there exists $C>0$
        \[
\sum_{n=1}^N \frac{|f(x_{n+1})-f(x_n)|^p}{|x_{n+1}-x_n|^{p-1}}\le C
\]
for all $N\in\mathbb N$ and any ordered set of real points $x_1<x_1<\cdots< x_N$.
\end{theorem}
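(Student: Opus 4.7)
The plan is to derive this corollary from Theorem \ref{mainth2} specialized to $(X,\mathcal F,\mu)=(\mathbb R,\mathcal B,du)$. Under this choice, condition \eqref{new-cond-p} restricted to the disjoint intervals $A_n=[x_n,x_{n+1}]$ is exactly the displayed hypothesis, since $\mu(A_n)=x_{n+1}-x_n$ and $M(A_n)=f(x_{n+1})-f(x_n)$ whenever $M(A)=\int_A g(u)\,du$. The forward direction $(\Rightarrow)$ is therefore immediate: set $M(A):=\int_A g(u)\,du$, apply Theorem \ref{mainth2} $(1)\Rightarrow(2)$ to obtain a uniform bound $C$ over all disjoint families, and specialize to the telescoping family $A_n=[x_n,x_{n+1}]$.

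For the converse $(\Leftarrow)$, I will construct $g$ explicitly and then verify $g\in\mathbf L^p$. First, applying H\"{o}lder's inequality in the form
$$
\sum_{i=1}^m|f(b_i)-f(a_i)|\le\Bigl(\sum_{i=1}^m\frac{|f(b_i)-f(a_i)|^p}{(b_i-a_i)^{p-1}}\Bigr)^{1/p}\Bigl(\sum_{i=1}^m(b_i-a_i)\Bigr)^{1/q}\le C^{1/p}\Bigl(\sum_{i=1}^m(b_i-a_i)\Bigr)^{1/q}
$$
to any disjoint collection of intervals $\{[a_i,b_i]\}$ shows that $f$ is absolutely continuous on every compact interval. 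Hence $g:=f'$ exists almost everywhere and $f(x)=f(0)+\int_0^x g(u)\,du$ by the fundamental theorem of calculus for absolutely continuous functions.

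The remaining step---upgrading ``$g$ exists a.e.''\ to ``$g\in\mathbf L^p(\mathbb R)$''---is the principal difficulty, because the hypothesis controls only discrete differences of $f$, not $g$ itself. The idea is to realize $g$ as an almost-everywhere limit of dyadic difference quotients and invoke Fatou's lemma. Define
$$
g_n(x):=2^n\bigl(f((k+1)2^{-n})-f(k2^{-n})\bigr),\qquad x\in[k2^{-n},(k+1)2^{-n}),\ k\in\mathbb Z.
$$
Applying the hypothesis to finite subgrids of $\{k2^{-n}\}_{k\in\mathbb Z}$ and letting the subgrid exhaust $\mathbb Z$ by monotone convergence yields
$$
\int_{\mathbb R}|g_n(x)|^p\,dx=\sum_{k\in\mathbb Z}\frac{|f((k+1)2^{-n})-f(k2^{-n})|^p}{(2^{-n})^{p-1}}\le C.
$$
Since $f$ is absolutely continuous, $g_n(x)\to g(x)$ a.e.\ by the Lebesgue differentiation theorem. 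Fatou's lemma then gives $\|g\|_p^p\le\liminf_n\|g_n\|_p^p\le C$, as required. (The displayed form $f(x)=\int_0^x g\,du$ presupposes $f(0)=0$; otherwise the identity holds up to the additive constant $f(0)$, which is invisible to the hypothesis.)
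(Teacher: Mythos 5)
Your proof is correct, but it takes a genuinely different route from what the paper intends. The paper states this result as a corollary of Theorem \ref{mainth2} and gives no argument; the implicit proof would run the converse through condition \eqref{new-cond-p}, which requires producing an additive set function $M$ on \emph{all} Borel sets of finite Lebesgue measure and bounding the sums over arbitrary disjoint families --- the hypothesis only controls increments over intervals, so this needs an extension step (e.g.\ the elementary inequality $\bigl|\sum_j a_j\bigr|^p/\bigl(\sum_j t_j\bigr)^{p-1}\le\sum_j |a_j|^p/t_j^{p-1}$ to pass from intervals to finite unions, and then density of interval step functions in $\mathbf L^q$ before invoking Riesz representation as in the proof of Theorem \ref{mainth2}). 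You instead give a self-contained classical argument (essentially F.~Riesz's characterization of indefinite integrals of $\mathbf L^p$ functions): H\"older gives absolute continuity, so $g=f'$ exists a.e.\ with $f(x)-f(0)=\int_0^x g$, and the uniform bound $\int|g_n|^p\le C$ on dyadic difference quotients plus a.e.\ convergence $g_n\to g$ and Fatou yields $g\in\mathbf L^p$. This buys independence from the reproducing-kernel/functional-analytic machinery and avoids the extension issue the paper glosses over, at the cost of not illustrating Theorem \ref{mainth2} at work. Two small points worth making explicit: your H\"older step for an arbitrary disjoint family $\{[a_i,b_i]\}$ uses the hypothesis for the chain $a_1<b_1\le a_2<\cdots$ after discarding the (nonnegative) gap terms, and the a.e.\ convergence $g_n\to g$ uses differentiation along non-centered dyadic intervals straddling $x$ (the straddle lemma at points of differentiability, or Lebesgue differentiation for regularly shrinking families), both routine but worth a line. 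Your remark that the stated identity $f(x)=\int_0^x g(u)\,du$ forces the normalization $f(0)=0$, which the hypothesis cannot see, correctly flags a slight imprecision in the paper's statement.
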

\section{Application to the $\mu$-Brownian motion}
\setcounter{equation}{0}
\label{sec-4}
Given a measure space $(X,\mathcal F)$ and a sigma-finite measure $\mu$ on $X$,  one introduces in a natural way three Hilbert spaces:\\
$(1)$ The Hilbert space $\mathbf L^2(X,\mathcal F,\mu)$.\\
$(2)$ The reproducing kernel Hilbert space $\mathfrak H(\mu)$ of functions defined on $\mathcal F^{\rm fin}$ with reproducing kernel
$K_\mu(A,B)=\mu(A\cap B).$\\
$(3)$ A probability space $ \mathbf L^2(\Omega,\mathcal C,\mathbb P) $ in which is constructed the
$\mu$-Brownian motion $W^{(\mu)}$ with covariance function $K^{(\mu)}(A,B)$,
\begin{equation}
  \label{4-1}
\mathbb E(W^{(\mu)}_AW^{(\mu)}_B)=\mu(A\cap B),\quad A,B\in\mathcal F^{\rm fin},
  \end{equation}
and  associated It\^o-type stochastic integrals
\begin{equation}
  V_\mu(f)=\int_X h(x)dW_{x}^{(\mu)},\quad h\in\mathbf L^2(X,\mathcal F,\mu).
\label{I-mu}
\end{equation}

\begin{remark}{\rm
In fact, $V_\mu$ is isometric into any $L^2$ probability space for which \eqref{4-1} is satisfied.
}
\end{remark}
We recall the following (see \cite{aj_jotp}):
\begin{proposition}
  \label{prop-4-1}
  The map $V_\mu$ is an isometry from $\mathbf L^2(X,\mathcal F,\mu)$ into $ \mathbf L^2(\Omega,\mathcal C,\mathbb P) $.
  \end{proposition}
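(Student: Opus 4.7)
The plan is the standard two-step density argument: first define $V_\mu$ by linearity on simple functions of the form $f=\sum_{n=1}^N c_n 1_{A_n}$, verify the isometry identity on this dense subspace using the covariance \eqref{4-1}, and then extend by continuity to all of $\mathbf L^2(X,\mathcal F,\mu)$.

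For the first step, I would set
\[
V_\mu\!\left(\sum_{n=1}^N c_n 1_{A_n}\right)=\sum_{n=1}^N c_n W_{A_n}^{(\mu)}.
\]
A preliminary point is well-definedness: two different formal expressions may represent the same simple function. Invoking Lemma \ref{Iowa-city-0502-2022}, every finite linear combination of indicators of sets in $\mathcal F^{\rm fin}$ can be rewritten with pairwise disjoint sets, so it is enough to check that the prescription agrees on such normal forms, and for these agreement reduces to the obvious fact that the coefficients in a disjoint representation are uniquely determined on $\{f\neq 0\}$.

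For the isometry identity on pairwise disjoint representations $f=\sum_{n=1}^N c_n 1_{A_n}$, I would compute
\[
\mathbb E\bigl|V_\mu(f)\bigr|^2
=\sum_{m,n=1}^N c_m\overline{c_n}\,\mathbb E\bigl(W_{A_m}^{(\mu)}W_{A_n}^{(\mu)}\bigr)
=\sum_{m,n=1}^N c_m\overline{c_n}\,\mu(A_m\cap A_n)
=\sum_{n=1}^N |c_n|^2\,\mu(A_n)
=\|f\|_{2}^{2},
\]
using \eqref{4-1} in the middle equality and the pairwise disjointness in the penultimate step. (For complex scalars one applies the same identity to real and imaginary parts, noting that the Gaussian process $W^{(\mu)}$ is real-valued, so $V_\mu$ is extended complex-linearly.)

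Finally, simple functions of the form $\sum c_n 1_{A_n}$ with $A_n\in\mathcal F^{\rm fin}$ are dense in $\mathbf L^2(X,\mathcal F,\mu)$: since $\mu$ is sigma-finite, any $h\in\mathbf L^2$ can be approximated first by truncation to a set of finite measure and then by measurable simple functions in the usual way. Since $V_\mu$ is linear and isometric on a dense subspace of a Hilbert space, it admits a unique continuous linear extension to all of $\mathbf L^2(X,\mathcal F,\mu)$, and the extension is still isometric. The only subtle step is really the well-definedness on simple functions, and that is handled entirely by Lemma \ref{Iowa-city-0502-2022}; the rest is a routine density argument.
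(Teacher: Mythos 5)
Your proof cannot be compared with an internal argument of the paper, because the paper does not prove Proposition \ref{prop-4-1}; it simply recalls it from \cite{aj_jotp}. What you give is the standard construction of the It\^o isometry: define $V_\mu$ on the span of $\{1_A:A\in\mathcal F^{\rm fin}\}$ by $1_A\mapsto W^{(\mu)}_A$, verify the isometry identity from the covariance \eqref{4-1}, and extend by continuity using density of such simple functions (which indeed holds because $\mu$ is sigma-finite). That is certainly the intended route, and the computation and the density/extension steps are fine.

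The one step that does not work as written is well-definedness. Lemma \ref{Iowa-city-0502-2022} rewrites the \emph{function} $\sum_j b_j 1_{B_j}$ as $\sum_n a_n 1_{A_n}$ with the $A_n$ pairwise disjoint, but it says nothing about the corresponding random variables: to pass from the original formal expression $\sum_j b_j W^{(\mu)}_{B_j}$ to the rewritten one $\sum_n a_n W^{(\mu)}_{A_n}$ you need finite additivity of $A\mapsto W^{(\mu)}_A$ in $\mathbf L^2(\Omega,\mathcal C,\mathbb P)$ (e.g. $W^{(\mu)}_A=W^{(\mu)}_{A\setminus B}+W^{(\mu)}_{A\cap B}$ a.s.), which is precisely the kind of compatibility the definition of $V_\mu$ is supposed to establish; invoking the lemma alone does not supply it. The repair is one line and makes the lemma unnecessary: by \eqref{4-1}, for an \emph{arbitrary} finite family $A_1,\dots,A_N\in\mathcal F^{\rm fin}$ (no disjointness required) one has $\mathbb E\bigl|\sum_n c_n W^{(\mu)}_{A_n}\bigr|^2=\sum_{m,n}c_m\overline{c_n}\,\mu(A_m\cap A_n)=\int_X\bigl|\sum_n c_n 1_{A_n}\bigr|^2\mu(dx)$. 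Applied to a combination representing the zero function, this yields well-definedness; applied in general, it yields the isometry on all simple functions at once. (Equivalently, keep your structure but first prove the additivity of $W^{(\mu)}$ by the same variance computation, $\mathbb E\bigl|W^{(\mu)}_{A\cup B}-W^{(\mu)}_A-W^{(\mu)}_B\bigr|^2=0$ for disjoint $A,B$.) With that adjustment your argument is complete.
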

In the  commutative diagram 
\begin{center}

  \usetikzlibrary{arrows.meta}

\begin{tikzpicture}
[>={Classical TikZ Rightarrow[length=1.2mm]}]
\def\a{2.5} \def\b{2.5}
\path[nodes={inner sep=1pt}]
(-\a,0) node (A) {$\mathbf L^2(X,\mathcal F,\mu)$}      
(\a,0) node (B) {$ \mathbf L^2(\Omega,\mathcal C,\mathbb P) $}
(0,-\b) node[align=center] (C) {$\mathfrak H(\mu)$\\[-1.3mm]\rotatebox{90}{}\\[-1.5mm]};
\begin{scope}[nodes={midway,scale=.75}]
\draw[->] (A)--(B) node[above]{$V_\mu$};
\draw[<-] (A)--(C.120) node[left]{$\nabla_\mu$};
\draw[<-] (C.60)--(B) node[right]{$T_\mu$};
\end{scope}
\end{tikzpicture}   
\end{center}

the maps $V_\mu$ and $\nabla_\mu$ were defined above and the map $T_\mu$ is defined by
\[
V_\mu\nabla_\mu T_\mu=I_{\mathbf L^2(\Omega,\mathcal C,\mathbb P)},
  \]
  so that
  \begin{equation}
    \label{TV}
    T_\mu=\nabla_{\mu}^{*}V_\mu^*
  \end{equation}
  since $V_\mu$ is an isometry and $\nabla_\mu$ is unitary.

\begin{proposition}
  The map $T_\mu$ and its adjoint are given by
\begin{eqnarray}
  (T_\mu\psi)(A)&=&\mathbb E\left(\psi W^{(\mu)}_A\right),\\
  T_\mu^*M&=&\int_Xh(x)dW^{(\mu)}_x
  \end{eqnarray}
where $h\in \mathbf L^2(X,\mathcal F,\mu)$ and $M\in\mathfrak H(\mu)$ is defined by $M(A)=\int_Ah(x)\mu(dx)$.
\end{proposition}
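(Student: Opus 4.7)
The plan is to exploit the factorization $T_\mu=\nabla_\mu^*V_\mu^*$ already established in \eqref{TV} together with the fact that $\nabla_\mu$ is unitary (so $T_\mu^*=V_\mu\nabla_\mu$) and that $V_\mu$ acts on indicator functions in the obvious way, namely $V_\mu(1_A)=W_A^{(\mu)}$ by \eqref{I-mu}.

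First I would verify the formula for $T_\mu^*$. Let $M\in\mathfrak H(\mu)$ be of the form $M(A)=\int_A h(x)\mu(dx)$ for some $h\in\mathbf L^2(X,\mathcal F,\mu)$. By the definition of the Krein-Feller derivative, $\nabla_\mu M=h$. Hence
\[
T_\mu^*M=V_\mu\nabla_\mu M=V_\mu h=\int_X h(x)\,dW_x^{(\mu)},
\]
which is the second displayed identity.

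Next I would derive the formula for $T_\mu\psi$ by combining the reproducing property in $\mathfrak H(\mu)$ with the adjoint relation. Fix $A\in\mathcal F^{\rm fin}$ and write $K_A(\cdot):=K^{(\mu)}(\cdot,A)=\mu(\cdot\cap A)$, which is the element of $\mathfrak H(\mu)$ associated with the function $h=1_A$ under Theorem \ref{MAMA-TH}; in particular $\nabla_\mu K_A=1_A$. Using the reproducing property and $T_\mu=\nabla_\mu^*V_\mu^*$,
\[
(T_\mu\psi)(A)=\langle T_\mu\psi,K_A\rangle_{\mathfrak H(\mu)}=\langle\nabla_\mu^*V_\mu^*\psi,K_A\rangle_{\mathfrak H(\mu)}=\langle V_\mu^*\psi,\nabla_\mu K_A\rangle_{\mu}=\langle V_\mu^*\psi,1_A\rangle_{\mu}.
\]
Finally, moving $V_\mu^*$ back across gives
\[
(T_\mu\psi)(A)=\langle \psi,V_\mu 1_A\rangle_{\mathbf L^2(\Omega,\mathcal C,\mathbb P)}=\langle\psi,W_A^{(\mu)}\rangle_{\mathbf L^2(\Omega,\mathcal C,\mathbb P)}=\mathbb E\bigl(\psi\,W_A^{(\mu)}\bigr),
\]
taking the $\mu$-Brownian motion to be real-valued so that complex conjugation on $W_A^{(\mu)}$ is harmless.

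I do not anticipate a genuine obstacle: once one notices that $K_A\in\mathfrak H(\mu)$ is precisely the preimage of $1_A$ under $\nabla_\mu$, both identities are essentially bookkeeping with adjoints. The only point that needs care is that $\nabla_\mu$ is \emph{unitary} (not merely isometric), so that $\nabla_\mu\nabla_\mu^*=I_{\mathbf L^2(\mu)}$ is available when one uses $T_\mu^*=V_\mu\nabla_\mu$; this has already been recorded in the corollary following Theorem \ref{mainth}.
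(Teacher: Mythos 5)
Your argument is correct and follows essentially the same route as the paper: both rest on the factorization $T_\mu=\nabla_\mu^*V_\mu^*$ (hence $T_\mu^*=V_\mu\nabla_\mu$, since $\nabla_\mu$ is unitary) together with $V_\mu 1_A=W_A^{(\mu)}$. The only cosmetic difference is that you recover $(T_\mu\psi)(A)=\langle V_\mu^*\psi,1_A\rangle_\mu$ via the reproducing property and the identity $\nabla_\mu K^{(\mu)}(\cdot,A)=1_A$, whereas the paper invokes the explicit integral formula \eqref{adjoint-nabla} for $\nabla_\mu^*$ directly.
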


\begin{proof}
  Using \eqref{TV} we have for $\psi\in\mathbf L^2(\Omega,\mathcal C,\mathbb P) $ and $A\in\mathcal F^{\rm fin}$:
  \[
    \begin{split}
      (T_\mu\psi)(A)&=    \int_A (V_\mu^*\psi)(x)\mu(dx)\\
      &=\langle V_\mu^*\psi,1_A\rangle_\mu\\
      &=\langle \psi, W^{(\mu)}_A\rangle_{\mathbb P}\\
      &=\mathbb E(\psi W_A^{(\mu)}).
    \end{split}
  \]
  Furthermore,
  \begin{equation}
    T_\mu^*M=V_\mu\nabla_\mu M=V_\mu h=\int_Xh(x)dW_x^{(\mu)}.
    \label{babylone}
    \end{equation}
  \end{proof}

  \begin{corollary}
    It holds that
    \begin{equation}
      T_\mu^*(K^{(\mu)}(\cdot, A))=W^{(\mu)}_A,\quad A\in\mathcal F^{\rm fin}.
      \end{equation}
  \end{corollary}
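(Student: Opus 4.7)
The plan is to view $K^{(\mu)}(\cdot,A)$ as an element of $\mathfrak H(\mu)$ whose Krein--Feller derivative is explicit, and then feed this directly into formula \eqref{babylone} from the preceding proposition.

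First I would identify the Krein--Feller derivative of the kernel section. By definition,
\[
K^{(\mu)}(B,A)=\mu(A\cap B)=\int_B 1_A(x)\,\mu(dx),\qquad B\in\mathcal F^{\rm fin},
\]
which is exactly the representation \eqref{KF-deriv} with $h=1_A\in\mathbf L^2(X,\mathcal F,\mu)$. Hence the function $B\mapsto K^{(\mu)}(B,A)$ lies in $\mathfrak H(\mu)$ and
\[
\nabla_\mu\bigl(K^{(\mu)}(\cdot,A)\bigr)=1_A.
\]

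Next I would apply the second identity of the preceding proposition, namely $T_\mu^* M=\int_X(\nabla_\mu M)(x)\,dW_x^{(\mu)}$, to $M=K^{(\mu)}(\cdot,A)$. Substituting $\nabla_\mu M=1_A$ gives
\[
T_\mu^*\bigl(K^{(\mu)}(\cdot,A)\bigr)=\int_X 1_A(x)\,dW_x^{(\mu)}=W_A^{(\mu)},
\]
where the last equality is the definition of the It\^o-type integral $V_\mu(1_A)$ for the $\mu$-Brownian motion.

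There is essentially no obstacle: the content of the corollary is just the reproducing property of $K^{(\mu)}$ combined with the factorization $T_\mu^*=V_\mu\nabla_\mu$ from \eqref{TV}. The only point worth being careful about is the consistency of notation between $K^{(\mu)}(\cdot,A)$ (the kernel regarded as a function of its first variable) and the element of $\mathfrak H(\mu)$ it represents; once this identification is made, the proof is a one-line application of \eqref{babylone}.
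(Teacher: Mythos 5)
Your proof is correct and follows exactly the route the paper takes: it observes that $K^{(\mu)}(\cdot,A)(B)=\mu(A\cap B)=\int_B 1_A\,d\mu$, so $\nabla_\mu K^{(\mu)}(\cdot,A)=1_A$, and then applies \eqref{babylone}. The paper states this in one line as a special case of \eqref{babylone} with $h=1_A$; you have merely spelled out the same argument in more detail.
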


  \begin{proof}
This is a special case of \eqref{babylone}, with $M(A)=K^{(\mu)}(\cdot, A)$, corresponding to $h(x)=1_A(x)$.
    \end{proof}
  
 In  Theorem \ref{mainth} we have defined a new kind of derivative, that allows us to give a precise characterization of $V_\mu^*$, which has the flavor of a derivative
  operator, and is presented in the following corollary:
  
  \begin{corollary}
    The adjoint of the map $V_\mu$ is given by
    \begin{equation}
      \label{v-mu-star}
  V_\mu^*\psi=\nabla_\mu\mathbb E (\psi W^{(\mu)}_\cdot),
\end{equation}
which we will also write as
      \begin{equation}
        \label{v-mu-star-2}
        (V_\mu^*\psi)(x)=\frac{d\mathbb E(\psi W^{(\mu)}_\cdot)}{d\mu}(x),
      \end{equation}
the precise meaning of this expression being given in terms of the operator $\nabla_\mu$.
    \end{corollary}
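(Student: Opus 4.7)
The plan is to derive the formula directly from the commutative diagram and the identities already established for $T_\mu$ and $\nabla_\mu$. The starting point is the factorization \eqref{TV}, namely $T_\mu=\nabla_\mu^* V_\mu^*$. Since the preceding Corollary shows that $\nabla_\mu$ is unitary from $\mathfrak H(\mu)$ onto $\mathbf L^2(X,\mathcal F,\mu)$, we have $\nabla_\mu\nabla_\mu^*=I_{\mathfrak H(\mu)}$, and hence composing on the left with $\nabla_\mu$ yields
\[
V_\mu^*=\nabla_\mu T_\mu.
\]
This is the conceptual heart of the argument; everything else is substitution.

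Next, I would invoke the explicit formula for $T_\mu$ established in the preceding Proposition, $(T_\mu\psi)(A)=\mathbb E(\psi W^{(\mu)}_A)$ for $\psi\in\mathbf L^2(\Omega,\mathcal C,\mathbb P)$ and $A\in\mathcal F^{\rm fin}$. Substituting this into $V_\mu^*=\nabla_\mu T_\mu$ gives immediately
\[
V_\mu^*\psi=\nabla_\mu\bigl(A\mapsto\mathbb E(\psi W^{(\mu)}_A)\bigr),
\]
which is precisely \eqref{v-mu-star}. Before applying $\nabla_\mu$ one must know that the set-function $A\mapsto\mathbb E(\psi W^{(\mu)}_A)$ actually lies in $\mathfrak H(\mu)$; but this is automatic, since $T_\mu$ is defined as a map into $\mathfrak H(\mu)$, so no separate verification (using, e.g., condition $(2)$ of Theorem \ref{mainth}) is needed here.

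The second formula \eqref{v-mu-star-2} is then simply a notational restatement. By the definition of the Krein-Feller derivative, writing $M(A)=\mathbb E(\psi W^{(\mu)}_A)$, the function $\nabla_\mu M$ is, by \eqref{KF-deriv-2}, the unique element of $\mathbf L^2(X,\mathcal F,\mu)$ such that $M(A)=\int_A(\nabla_\mu M)(x)\mu(dx)$ for every $A\in\mathcal F^{\rm fin}$. It is therefore consistent to denote $\nabla_\mu M$ by the symbolic expression $\frac{dM}{d\mu}$, interpreted in the extended sense of Theorem \ref{mainth}, and the second displayed identity follows.

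The only mild subtlety, rather than a genuine obstacle, is insisting on the distinction between the classical Radon-Nikodym derivative (which need not exist here, since $\mathbb E(\psi W^{(\mu)}_\cdot)$ is in general only additive on $\mathcal F^{\rm fin}$ and not a bona fide signed measure) and the Krein-Feller derivative $\nabla_\mu$. This is precisely the situation for which Theorem \ref{mainth} was designed, and it is what justifies the notation used in \eqref{v-mu-star-2}.
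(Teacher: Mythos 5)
Your argument is correct and is essentially the paper's own route: the paper treats the corollary as an immediate consequence of \eqref{TV} together with the formula $(T_\mu\psi)(A)=\mathbb E(\psi W^{(\mu)}_A)$, and indeed later (in the proof of Proposition \ref{propcmu}) it explicitly invokes $V_\mu^*=\nabla_\mu T_\mu$ ``in view of \eqref{TV}'', exactly as you do by using that $\nabla_\mu$ is unitary. Your remarks that membership of $\mathbb E(\psi W^{(\mu)}_\cdot)$ in $\mathfrak H(\mu)$ is automatic and that \eqref{v-mu-star-2} is only notation for the Krein--Feller derivative are consistent with the paper's reading.
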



  \begin{remark}{\rm It follows from the above and from Proposition \ref{prop-4-1} that
      \begin{equation}
        \label{op}
\int_X(\nabla_\mu^*\psi)(x)dW_x^{(\mu)}=\mathbb E(\psi|\mathcal C_\mu)
        \end{equation}
holds for all $\psi\in\mathbf L^2(\Omega,\mathcal C,\mathbb P)$. Hence \eqref{op} justifies calling $\psi\mapsto V_\mu^*\psi$ an It\^o derivative.}
    \end{remark}

    We now interpret some of the previous results in terms of a conditional expectation in the underlying probability space
    $\mathbf L^2(\Omega,\mathcal C,\mathbb P)$.

    \begin{proposition} 
      Let $\mathcal C_\mu$ denote the sigma-algebra generated by the random variables $W^{(\mu)}_A$, $A\in\mathcal F^{\rm fin}$.
      Then,
      \begin{equation}
        T_\mu^*T_\mu=\mathbb E(\cdot|\mathcal C_\mu).
      \end{equation}
      \label{propcmu}
      \end{proposition}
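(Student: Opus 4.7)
The plan is to reduce $T_\mu^* T_\mu$ to the operator $V_\mu V_\mu^*$ via the factorization \eqref{TV}, and then identify the latter as the orthogonal projection implementing conditional expectation. Starting from $T_\mu=\nabla_\mu^* V_\mu^*$ in \eqref{TV}, taking adjoints gives $T_\mu^*=V_\mu\nabla_\mu$. The corollary following Theorem \ref{mainth} establishes that $\nabla_\mu$ is unitary from $\mathfrak H(\mu)$ onto $\mathbf L^2(X,\mathcal F,\mu)$, so $\nabla_\mu\nabla_\mu^*$ is the identity of $\mathbf L^2(X,\mathcal F,\mu)$. Hence
\[
T_\mu^* T_\mu \;=\; V_\mu\,\nabla_\mu\,\nabla_\mu^*\,V_\mu^* \;=\; V_\mu V_\mu^*.
\]
By Proposition \ref{prop-4-1}, $V_\mu$ is an isometry; therefore $V_\mu V_\mu^*$ is the orthogonal projection of $\mathbf L^2(\Omega,\mathcal C,\mathbb P)$ onto $\operatorname{Ran}(V_\mu)$.

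Next, I would identify $\operatorname{Ran}(V_\mu)$ with $\mathbf L^2(\Omega,\mathcal C_\mu,\mathbb P)$. Plugging $h=1_A$ into \eqref{I-mu} yields $V_\mu(1_A)=W_A^{(\mu)}$; by density of simple functions in $\mathbf L^2(X,\mathcal F,\mu)$, $\operatorname{Ran}(V_\mu)$ equals the closed linear span of $\{W_A^{(\mu)}:A\in\mathcal F^{\rm fin}\}$ in $\mathbf L^2(\Omega,\mathcal C,\mathbb P)$. This span is contained in $\mathbf L^2(\Omega,\mathcal C_\mu,\mathbb P)$ by the very definition of $\mathcal C_\mu$. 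For the reverse inclusion, one appeals to the construction of the ambient probability space (already tacitly invoked in \eqref{op}), in which every $\mathcal C_\mu$-measurable square-integrable random variable is to be represented as an element of $\overline{\operatorname{span}}\{W_A^{(\mu)}:A\in\mathcal F^{\rm fin}\}$.

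With this identification in hand the conclusion is immediate: the orthogonal projection of $\mathbf L^2(\Omega,\mathcal C,\mathbb P)$ onto the closed subspace $\mathbf L^2(\Omega,\mathcal C_\mu,\mathbb P)$ is by definition the conditional expectation operator $\psi\mapsto\mathbb E(\psi\mid\mathcal C_\mu)$, whence $T_\mu^* T_\mu=\mathbb E(\cdot\mid\mathcal C_\mu)$. The only step requiring genuine thought is the identification $\operatorname{Ran}(V_\mu)=\mathbf L^2(\Omega,\mathcal C_\mu,\mathbb P)$, since the closed linear span of a family of (possibly Gaussian) random variables generally captures only a ``first-order'' subspace of the $\sigma$-algebra they generate; everything else in the argument is formal manipulation of \eqref{TV} together with the unitarity of $\nabla_\mu$ and the isometric property of $V_\mu$.
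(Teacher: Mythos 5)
Your reduction is exactly the paper's first step: from \eqref{TV} and the unitarity of $\nabla_\mu$ one gets $T_\mu^*T_\mu=V_\mu\nabla_\mu\nabla_\mu^*V_\mu^*=V_\mu V_\mu^*$, and by Proposition \ref{prop-4-1} together with $V_\mu 1_A=W^{(\mu)}_A$ this is the orthogonal projection of $\mathbf L^2(\Omega,\mathcal C,\mathbb P)$ onto $\overline{\operatorname{span}}\{W^{(\mu)}_A:A\in\mathcal F^{\rm fin}\}$. The gap is precisely the step you flag at the end and then wave through: the identification $\operatorname{Ran}(V_\mu)=\mathbf L^2(\Omega,\mathcal C_\mu,\mathbb P)$ does not follow from ``the construction of the ambient probability space,'' and for a genuinely Gaussian field it is false as stated. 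The closed linear span of the variables $W^{(\mu)}_A$ is only the first Wiener chaos of the process, whereas $\mathbf L^2(\Omega,\mathcal C_\mu,\mathbb P)$ contains all higher chaoses. Concretely, take $A\in\mathcal F^{\rm fin}$ with $\mu(A)>0$ and set $\psi=(W^{(\mu)}_A)^2-\mu(A)$: this is a nonzero, square-integrable, $\mathcal C_\mu$-measurable random variable with $\mathbb E(\psi\,W^{(\mu)}_B)=0$ for every $B\in\mathcal F^{\rm fin}$ (odd moments of a centered Gaussian vector vanish), so $V_\mu V_\mu^*\psi=0$ while $\mathbb E(\psi\,|\,\mathcal C_\mu)=\psi$. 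Hence what your argument actually proves is that $T_\mu^*T_\mu$ is the orthogonal projection onto the Gaussian (first-chaos) subspace generated by the $W^{(\mu)}_A$; the jump from there to the conditional expectation cannot be ``immediate.''

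For comparison, the paper's own proof makes the same reduction to $V_\mu V_\mu^*$ but then, instead of identifying the range, verifies the defining identity $\langle V_\mu V_\mu^*\psi, V_\mu f\rangle_{\mathbb P}=\mathbb E\bigl(\psi\,\overline{V_\mu f}\bigr)$ only for test vectors $V_\mu f$ with $f=1_A$, i.e.\ it checks the conditional-expectation property only against elements of $\operatorname{Ran}(V_\mu)$. That computation establishes exactly the projection statement above and passes silently over the same density question you raised; so your instinct about where the real content sits is sound, but the appeal to the construction of $(\Omega,\mathcal C,\mathbb P)$ does not close it. To make the argument complete one must either interpret the right-hand side as the projection onto $\overline{\operatorname{span}}\{W^{(\mu)}_A\}$ (equivalently, restrict $\psi$ to that span, as in \eqref{op}), or supply an additional hypothesis forcing the first chaos to exhaust $\mathbf L^2(\Omega,\mathcal C_\mu,\mathbb P)$.
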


      \begin{proof}
        In view of \eqref{TV}, and since $\nabla_\mu$ is unitary, it is enough to show that
        \begin{equation}
          \label{dindon}
          V_\mu V_\mu^*\psi=\mathbb E(\psi|\mathcal C_\mu),\quad \psi\in \mathbf L^2(\Omega,\mathcal C,\mathbb P),
        \end{equation}
        i.e. the $\mathcal C_\mu$ conditional expectation.         This in turn is equivalent to verify that
        \begin{equation}
          \langle V_\mu V_\mu^*\psi,   \int_Xf(x)dW^{(\mu)}_x\rangle_{\mathbb P}=    \mathbb E(\psi(\overline{\int_Xf(x)dW^{(\mu)}_x})),\quad
        \forall f\in\mathbf L^2(X,\mathcal F,\mathbb P),
        \label{lalala}
      \end{equation}
      i.e.
      \begin{equation}
          \langle V_\mu V_\mu^*\psi,   V_\mu f\rangle_{\mathbb P}=    \mathbb E_P(\psi(\overline{\int_Xf(x)dW^{(\mu)}_x})).
          \label{lalila}
          \end{equation}
        We can restrict $f$ to be of the form $1_A$ with $A\in\mathcal F^{\rm fin}$. Since $V_\mu$ is an isometry  \eqref{lalila} becomes equivalent to
        \[
\int_A (V_\mu^*\psi)(x)\mu(dx)=\mathbb E (\psi W_A).
\]
Applying $\nabla_\mu$ on both sides we get
\[
V_\mu^*\psi=\nabla_\mu(\mathbb E (\psi W_A)),
\]
which is nothing but $V_\mu^*=\nabla_\mu T_\mu$, which holds in view of \eqref{TV}.
        \end{proof}

      \begin{remark}{\rm We set $Q_\mu=V_\mu V_\mu^*$. Note that
          \begin{equation}
            Q_\mu V_\mu=V_\mu.
          \end{equation}
          \label{danton}
        }
      \end{remark}
      
      \section{Transition-probability systems}
      \setcounter{equation}{0}
      There are diverse approaches to the following general question: Given some stochastic data, then find an appropriate probability space
      $(\Omega, \mathcal C, {\mathbb P})$ that
realizes what is needed for the particular data at hand. Below we make this precise and we offer a brief outline with citations, especially \cite{MR0282379,MR0402840}.
By probability space $(\Omega, \mathcal C, {\mathbb P})$ we mean a triple consisting of a sample set $\Omega$, a sigma-algebra of events, and a probability
measure ${\mathbb P}$
defined on $\mathcal C$. Of the following four approaches to the problem, for our present purpose, number $(ii)$ is best suited. The list of four is: $(i)$
via Kolmogorov consistency,  $(ii)$ via Gaussian Hilbert space, $(iii)$ via transition kernels, and with the use of $(iv)$ generalized Gelfand triples.
While for many purposes, the Kolmogorov consistency construction $(i)$ is more constructive; here $(ii)$ is better, i.e., via $(ii)$ we obtain a probability space
$(\Omega, \mathcal C, {\mathbb P})$ from the following Gaussian Hilbert space construction:  Starting with a Hilbert space $\mathcal H$, we select a realization of the vectors
$h\in\mathcal H$ as a canonical Gaussian process $W_h$.  Hence, the realization of a Gaussian Hilbert space in some $(\Omega, \mathcal C, {\mathbb P})$ has
its associated covariance kernel equal to the inner product from $\mathcal H$.
Here we may use construction $(ii)$ on the canonical and universal Hilbert space in the sense of Nelson and Schwartz. We recall that this universal Hilbert space is a
Hilbert space of specific equivalence classes of pairs.\smallskip

{\bf Discussion of (ii):} For our present applications, we begin with a given generalized measure space $(X, \mathcal F)$, where $\mathcal F$ is a prescribed sigma-algebra. Consider systems of positive measures $(\mu)$ . We note that the positive measures $\mu$ will be based on the same  $\mathcal F$, but of
course the ring  $\mathcal F^{\rm fin}(\mu) $ will depend on $\mu$.
Of the books covering Gaussian Hilbert space and their applications, we stress \cite{MR0282379}. Summary of details for the construction leading from $\mathcal H$ to
$(\Omega, \mathcal C, {\mathbb P})$ has: $(a)$ We let $\mathcal H = \mathfrak H_{(X, \mathcal F)}$ to be the universal Hilbert space in the sense of Nelson and Schwartz, and then:
$(b)$, via an associated system of It\^o-isometries, we pass to a choice of a  ``universal'' $(\Omega, \mathcal C, {\mathbb P})$ probability space.
Specializing to two $\mu$-Brownian motions, say  $W^{(\mu_i)}, i= 1,2$, for $(\Omega, \mathcal C, {\mathbb P})$ they will be independent if and only if the two measures $\mu_i$
are mutually singular. \smallskip

In brief summary, the remaining two approaches are as follows: $(iii)$ Fix a system $(\mu)$, create an associated system of transition
kernels, and then construct $(\Omega, \mathcal F, {\mathbb P})$ from the combined Markov kernels. For other purposes, of course, we have $(iv)$
Gelfand triple constructions, see e.g., \cite{ajl_jotp}. For completeness we recall the construction of the universal Hilbert space; see \cite{Nelson_flows}.

\begin{definition}
  {\rm Given a fixed measure space $(X,\mathcal F)$, the associated universal Hilbert space $\mathfrak H_{(X,\mathcal F)}$ consists of equivalence classes of  pairs $(f, \mu)$, where $\mu$ is a positive measure on $(X,\mathcal F)$ and $f\in\mathbf L^2(X,\mathcal F)$. One says that
    $(f_1,\mu_1)\sim (f_2,\mu_2)$ if there exists a positive measure $\nu$ on $(X,\mathcal F)$  such that $\mu_1<<\nu$ and $\mu_2<<\nu$ and
    \begin{equation}
      \label{sch}
f_1\sqrt{\frac{{\rm d}\mu_1}{{\rm d}\nu}}=f_2\sqrt{\frac{{\rm d}\mu_2}{{\rm d}\nu}}, \quad \nu \,\,a.e.  
\end{equation}
    }
  \end{definition}
  It is known (see \cite{Nelson_flows}) that \eqref{sch} is indeed an equivalence relation, and an equivalence class for this relation will be denoted by $f\sqrt{\mu}$. The set of equivalence classes
  endowed with the norm $\|f\sqrt{\mu}\|^2_{\mathfrak H_{(X,\mathcal F)}}=\int_X|f(x)|^2\mu(dx)$ where $(f,\mu)\in f\sqrt{\mu}$ is a Hilbert space. We denote by $\mathbf L^2(\Omega,\mathcal C,{\mathbb P})$ the associated universal probability space, constructed as follows: One considers an orthonormal basis $(e_a)_{a\in A}$  of $\mathfrak H_{(X,\mathcal F)}$ and build
  \[
    \Omega=\prod_{a\in A}( \mathbb R,\frac{1}{\sqrt{2\pi}}e^{-\frac{x^2}{2}}dx),
    \]
endowed with the cylinder algebra; see \cite[pp. 38-39]{Neveu68}.
  \begin{theorem}
The It\^o integrals pass through the equivalence relation, meaning that the map
    \[
      f\sqrt{\mu}\,\,\,\mapsto\,\,\, V_\mu f\in\mathbf L^2(\Omega,\mathcal C,{\mathbb P})
    \]
    is a well defined isometry from the universal Hilbert space into the associated universal probability space.
    \end{theorem}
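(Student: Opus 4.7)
The plan is to identify $V_\mu f$ with the canonical Gaussian realization of the vector $f\sqrt{\mu}\in\mathfrak H_{(X,\mathcal F)}$ inside the universal probability space $\mathbf L^2(\Omega,\mathcal C,\mathbb P)$, and then read off both the well-definedness and the isometry from general facts about Gaussian Hilbert spaces. More precisely, the universal probability space is constructed from a chosen orthonormal basis $(e_a)_{a\in A}$ of $\mathfrak H_{(X,\mathcal F)}$ as a product of standard Gaussians, which yields a canonical isometric linear map $W\colon\mathfrak H_{(X,\mathcal F)}\to \mathbf L^2(\Omega,\mathcal C,\mathbb P)$ sending $h\mapsto W_h$, with $\mathbb E(W_h\overline{W_k})=\langle h,k\rangle_{\mathfrak H_{(X,\mathcal F)}}$. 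Since $W$ is defined on the Hilbert space itself, it automatically factors through the equivalence relation \eqref{sch}.

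The first step I would carry out is the identification $V_\mu f = W_{f\sqrt{\mu}}$ for every sigma-finite positive measure $\mu$ on $(X,\mathcal F)$ and every $f\in\mathbf L^2(X,\mathcal F,\mu)$. For $f=1_A$ with $A\in\mathcal F^{\rm fin}$ this amounts to checking the covariance identity
\[
\mathbb E\bigl(W_{1_A\sqrt{\mu}}\,\overline{W_{1_B\sqrt{\mu}}}\bigr)=\langle 1_A\sqrt{\mu},1_B\sqrt{\mu}\rangle_{\mathfrak H_{(X,\mathcal F)}}=\int_X 1_A(x)1_B(x)\mu(dx)=\mu(A\cap B),
\]
which matches \eqref{4-1}, so $W_{1_A\sqrt{\mu}}$ provides a realization of $W^{(\mu)}_A$; extending by linearity on simple functions and then by density using the isometry in Proposition \ref{prop-4-1} yields $V_\mu f = W_{f\sqrt{\mu}}$ for all $f\in\mathbf L^2(X,\mathcal F,\mu)$.

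Once this identification is in hand, the conclusions are immediate. If $(f_1,\mu_1)\sim(f_2,\mu_2)$ with common dominating measure $\nu$, then $f_1\sqrt{\mu_1}$ and $f_2\sqrt{\mu_2}$ represent the same element of $\mathfrak H_{(X,\mathcal F)}$, hence
\[
V_{\mu_1}f_1=W_{f_1\sqrt{\mu_1}}=W_{f_2\sqrt{\mu_2}}=V_{\mu_2}f_2
\]
in $\mathbf L^2(\Omega,\mathcal C,\mathbb P)$, giving well-definedness. The isometry is then inherited from the isometry of $W$, or equivalently directly from Proposition \ref{prop-4-1}:
\[
\|V_\mu f\|_{\mathbb P}^2=\int_X|f(x)|^2\mu(dx)=\|f\sqrt{\mu}\|^2_{\mathfrak H_{(X,\mathcal F)}}.
\]
The main technical point, and the step I expect to require the most care, is the identification in the second paragraph: it hinges on the fact that the universal probability space constructed from a single orthonormal basis of $\mathfrak H_{(X,\mathcal F)}$ simultaneously realizes all the $\mu$-Brownian motions $W^{(\mu)}$ coherently, so that covariances across different measures are governed by the universal inner product. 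Everything else is a routine linearity/density argument.
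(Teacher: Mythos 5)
Your argument is correct, but it runs along a different track than the paper's. The paper proves well-definedness by a direct computation at the level of the It\^o integrals: choosing a common dominating measure $\nu$ for $\mu_1,\mu_2$, it uses the change-of-density identity $\int_X f_i\,dW^{(\mu_i)}=\int_X f_i\sqrt{\tfrac{{\rm d}\mu_i}{{\rm d}\nu}}\,dW^{(\nu)}$ and then the defining relation \eqref{sch} to conclude $V_{\mu_1}f_1=V_{\mu_2}f_2$, the isometry coming from Proposition \ref{prop-4-1}. You instead work globally: you identify $V_\mu f$ with $W_{f\sqrt{\mu}}$, where $W$ is the canonical Gaussian field over the universal Hilbert space $\mathfrak H_{(X,\mathcal F)}$ built from the chosen orthonormal basis, so that the map is literally defined on equivalence classes and both well-definedness and the isometry are automatic. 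The two proofs buy different things: yours gives the cleaner conceptual picture (one isometry $W$ on $\mathfrak H_{(X,\mathcal F)}$ does all the work), while the paper's makes explicit where \eqref{sch} enters and stays at the level of concrete stochastic integrals. Note, though, that your covariance check only pins down $W_{1_A\sqrt{\mu}}$ in distribution; the actual identity $W^{(\mu)}_A=W_{1_A\sqrt{\mu}}$ in $\mathbf L^2(\Omega,\mathcal C,\mathbb P)$ is a matter of how the $\mu$-Brownian motions are realized coherently in the universal probability space --- which you rightly flag, and which is exactly the same coherence that the paper's first and last equalities (the change-of-density relations to $W^{(\nu)}$) presuppose, so the two arguments rest on the same unproved structural input and are on equal footing there.
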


    \begin{proof}
      It holds that
      \[
        \int_Xf_1(x)dW^{(\mu_1)}_x=\int_Xf_1(x)\sqrt{\frac{{\rm d}\mu_1}{{\rm d}\nu}}(x)dW^{(\nu)}_x
          =\int_Xf_2(x)\sqrt{\frac{{\rm d}\mu_2}{{\rm d}\nu}}(x)dW^{(\nu)}_x=\int_Xf_2(x)dW^{(\mu_2)}_x.        
        \]
      \end{proof}

      As a corollary we have:

      \begin{corollary}
        Given two sigma-finite measures $\mu_1$ and $\mu_2$ on $X$. The following are equivalent:\\
        $(1)$ $\mu_1$ and $\mu_2$ are mutually singular.\\
        $(2)$ The corresponding $\mu$-Brownian motions $W^{(\mu_1)}$ and $W^{(\mu_2)}$ are independent.
        \end{corollary}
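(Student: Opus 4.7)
The strategy is to translate independence into a covariance-vanishing condition and then interpret that condition inside the universal Hilbert space $\mathfrak H_{(X,\mathcal F)}$. Both families $\{W^{(\mu_i)}_A\}_{A\in\mathcal F^{\rm fin}(\mu_i)}$, $i = 1,2$, sit inside the common centered Gaussian Hilbert space obtained via the It\^o isometry of the preceding theorem, hence are jointly Gaussian. For such jointly Gaussian centered families, independence of the two generated sub-sigma-algebras is equivalent to the vanishing of every cross-covariance; the corollary therefore reduces to showing that $\mu_1 \perp \mu_2$ if and only if $\mathbb E(W^{(\mu_1)}_A W^{(\mu_2)}_B) = 0$ for all $A\in\mathcal F^{\rm fin}(\mu_1)$ and $B\in\mathcal F^{\rm fin}(\mu_2)$.

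To compute the cross-covariance, I would pick the common dominating measure $\nu = \mu_1 + \mu_2$, which is sigma-finite. The class $1_A\sqrt{\mu_1}$ in $\mathfrak H_{(X,\mathcal F)}$ is then represented by $1_A\sqrt{{\rm d}\mu_1/{\rm d}\nu}$ inside $\mathbf L^2(X,\mathcal F,\nu)$, and similarly for $\mu_2$; combining the isometry $f\sqrt{\mu}\mapsto V_\mu f$ with the identity $V_{\mu_i}(1_A) = W^{(\mu_i)}_A$ yields
\[
\mathbb E\big(W^{(\mu_1)}_A W^{(\mu_2)}_B\big) \;=\; \big\langle 1_A\sqrt{\mu_1},\,1_B\sqrt{\mu_2}\big\rangle_{\mathfrak H_{(X,\mathcal F)}} \;=\; \int_{A\cap B}\sqrt{\tfrac{{\rm d}\mu_1}{{\rm d}\nu}}\sqrt{\tfrac{{\rm d}\mu_2}{{\rm d}\nu}}\,{\rm d}\nu.
\]

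The implication $(1)\Rightarrow(2)$ is then immediate: mutual singularity produces $N\in\mathcal F$ with $\mu_1(N) = 0 = \mu_2(X\setminus N)$, so the two Radon-Nikodym densities have disjoint supports modulo $\nu$-null sets, the integrand vanishes $\nu$-a.e., and every cross-covariance is zero. For $(2)\Rightarrow(1)$, I would exhaust $X$ by a countable increasing family $A_k\in\mathcal F^{\rm fin}(\mu_1)\cap\mathcal F^{\rm fin}(\mu_2)$ of finite $\nu$-measure (possible because $\nu$ is sigma-finite). Taking $A = B = A_k$ and using non-negativity of the integrand, the hypothesis forces $\sqrt{{\rm d}\mu_1/{\rm d}\nu}\sqrt{{\rm d}\mu_2/{\rm d}\nu} = 0$ $\nu$-a.e.\ on each $A_k$, hence $\nu$-a.e.\ on $X$; then $N := \{{\rm d}\mu_1/{\rm d}\nu > 0\}$ satisfies $\mu_2(N) = 0$ and $\mu_1(X\setminus N) = 0$, witnessing $\mu_1\perp\mu_2$. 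I expect the main technical point to be the careful bookkeeping of null sets across the three measures $(\mu_1,\mu_2,\nu)$ and the verification that the exhausting sets $A_k$ lie simultaneously in both rings $\mathcal F^{\rm fin}(\mu_i)$; once this is in place, the rest is a direct translation between the universal Hilbert space inner product and the Gaussian covariance structure.
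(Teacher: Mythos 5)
Your proposal is correct, and it is essentially the argument the paper leaves implicit: the corollary is stated without proof immediately after the theorem that $f\sqrt{\mu}\mapsto V_\mu f$ is an isometry of the universal Hilbert space into the universal (Gaussian) probability space, so the intended route is exactly yours, namely that mutual singularity of $\mu_1,\mu_2$ is equivalent to orthogonality of the classes $1_A\sqrt{\mu_1}$ and $1_B\sqrt{\mu_2}$ (computed with a common dominating measure such as $\nu=\mu_1+\mu_2$), which via the isometry is vanishing of all cross-covariances, and for the jointly Gaussian centered families realized in this construction that is equivalent to independence. Your bookkeeping of null sets and the exhaustion by sets of finite $\nu$-measure correctly fills in the details the paper omits.
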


\begin{theorem} (transition probability systems)
  \label{th5-1}
  Using notation \eqref{v-mu-star-2} we have:
  \begin{eqnarray}
    V_{\mu_1}^*(W^{(\mu_2)}_B)(x)&=&\frac{{\rm d}\mathbb E(W^{(\mu_1)}_\cdot W_B^{(\mu_2)})}{{\rm d}\mu_1}(x)\\
    V_{\mu_2}^*(W^{(\mu_1)}_A)(y)&=&\frac{{\rm d}\mathbb E(W^{(\mu_2)}_\cdot W_A^{(\mu_1)})}{{\rm d}\mu_2}(y).
    \end{eqnarray}
  \end{theorem}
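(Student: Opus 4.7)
The plan is to recognize this theorem as an immediate specialization of the formula \eqref{v-mu-star-2} characterizing $V_\mu^*$, once we set the correct $\psi$. Recall that for every $\psi \in \mathbf L^2(\Omega,\mathcal C,\mathbb P)$ the corollary preceding Proposition \ref{propcmu} asserts
\[
(V_\mu^*\psi)(x) = \frac{d\mathbb E(\psi W^{(\mu)}_\cdot)}{d\mu}(x),
\]
with the right-hand side interpreted via the Krein–Feller derivative $\nabla_\mu$ applied to the element $A\mapsto\mathbb E(\psi W^{(\mu)}_A)$ of $\mathfrak H(\mu)$.

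First I would note that, by the construction of the universal probability space in Section 5, both $W^{(\mu_1)}$ and $W^{(\mu_2)}$ are realized as genuine random variables inside a common $\mathbf L^2(\Omega,\mathcal C,\mathbb P)$, so the expectations $\mathbb E(W^{(\mu_1)}_A W^{(\mu_2)}_B)$ are unambiguously defined. For fixed $B\in\mathcal F^{\rm fin}(\mu_2)$, the random variable $\psi:=W^{(\mu_2)}_B$ belongs to $\mathbf L^2(\Omega,\mathcal C,\mathbb P)$ since $\mathbb E(|W^{(\mu_2)}_B|^2)=\mu_2(B)<\infty$.

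Next I would verify that the function
\[
A\longmapsto \mathbb E\bigl(W^{(\mu_1)}_A W^{(\mu_2)}_B\bigr),\qquad A\in\mathcal F^{\rm fin}(\mu_1),
\]
is an element of $\mathfrak H(\mu_1)$. This is precisely $(T_{\mu_1}\psi)(A)$ with $\psi=W^{(\mu_2)}_B$, so membership in $\mathfrak H(\mu_1)$ is automatic from $T_{\mu_1}\colon \mathbf L^2(\Omega,\mathcal C,\mathbb P)\to\mathfrak H(\mu_1)$ being well defined (the map $T_\mu$ in the commutative diagram). Applying $\nabla_{\mu_1}$ then yields
\[
V_{\mu_1}^*\bigl(W^{(\mu_2)}_B\bigr) = \nabla_{\mu_1}\mathbb E\bigl(W^{(\mu_1)}_\cdot W^{(\mu_2)}_B\bigr),
\]
which is exactly the first asserted identity once rewritten via \eqref{v-mu-star-2}. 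The second identity is obtained by interchanging the roles of $\mu_1$ and $\mu_2$, with $\psi=W^{(\mu_1)}_A$.

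There is no real obstacle here beyond unpacking notation: the substantive content is already packaged in the corollary giving \eqref{v-mu-star-2} and in Theorem \ref{mainth} (which supplies the meaning of the Radon–Nikodym-like quotient $d\mathbb E(\psi W^{(\mu)}_\cdot)/d\mu$ even when the numerator fails to be $\sigma$-additive). The only subtlety worth flagging in the write-up is that both Brownian motions must be realized simultaneously on the common universal probability space so that the cross-expectations make sense; this is guaranteed by the construction recalled just above the theorem.
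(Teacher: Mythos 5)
Your proposal is correct and follows essentially the same route as the paper: the paper's proof is exactly the specialization of \eqref{v-mu-star}/\eqref{v-mu-star-2} with $\mu=\mu_1$, $\psi=W^{(\mu_2)}_B$, and an interchange of indices for the second identity. The extra remarks you add (realizing both processes on the common universal probability space and membership of $A\mapsto\mathbb E(W^{(\mu_1)}_A W^{(\mu_2)}_B)$ in $\mathfrak H(\mu_1)$ via $T_{\mu_1}$) are sound but only make explicit what the paper leaves implicit.
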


  \begin{proof}
The first formula is a special case of \eqref{v-mu-star} with $\mu=\mu_1$ and $\psi=W^{(\mu_2)}_B$. The second formula interchanges the indices $1$ and $2$.
  \end{proof}

      \begin{tikzpicture}
\node[] (M) at (0,0) {$(X,\mu_1$)};
\node[] (B) at (5,0) {$\vspace{-2mm}(X,\mu_2)$};
\node[] (C) at (0,5) {};
\node[] (D) at (5,5) {};
\node[]  (E) at(0,3)[right] {$x$};
\node[draw] (F) at(5,4) [left]{$B$};
%
\draw[] (B) -- (D);
\draw (C) -- (M);
\draw[->] (E)--node[above=0.05cm]{$P(x,\cdot)$} (F);
%
%
%
\node[] (B1) at (15,0) {$\vspace{-2mm}(X,\mu_2)$};
\node[] (M1) at (10,0) {$(X,\mu_1$)};
\node[] (C1) at (10,5) {};
\node[] (D1) at (15,5) {};
\node[draw]  (E1) at(10,3)[right] {$A$};
\node[] (F1) at(15,4) [left]{$y$};
%
\draw[] (B1) -- (D1);
\draw (C1) -- (M1);
\draw[->] (F1)--node[above=0.05cm]{$Q(y,\cdot)$} (E1);
      \end{tikzpicture}
  
  \begin{notation} We set
    \begin{eqnarray}
      \label{w34}
\frac{{\rm d}\mathbb E(W^{(\mu_1)}_\cdot W_B^{(\mu_2)})}{{\rm d}\mu_1}(x)&=&P(x,B)\\
\label{w45}
      \frac{{\rm d}\mathbb E(W^{(\mu_2)}_\cdot W_A^{(\mu_1)})}{{\rm d}\mu_2}(y)&=&Q(y,A).
                                                                                 \end{eqnarray}
                                                                               \end{notation}

                                                                               \begin{theorem}
                                                                                 \label{567}
    It holds that
                                                                                 \begin{eqnarray}
                                                                                   \label{lundi-matin}
\left(V_{\mu_1}^*V_{\mu_2}f_2\right)(x)&=&\int_XP(x,dy)f_2(y),\quad f_2\in\mathbf L^2(X,\mathcal F,\mu_2)\\
                                                                                   \label{mardi}
\left(V_{\mu_2}^*V_{\mu_1}f_1\right)(y)&=&\int_XQ(y,dx)f_1(x),\quad f_1\in\mathbf L^2(X,\mathcal F,\mu_1).
                                                                                                                              \end{eqnarray}
                                                                                                                            \end{theorem}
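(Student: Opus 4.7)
The plan is to reduce \eqref{lundi-matin} to the case of indicator functions via Theorem \ref{th5-1}, and then extend by linearity and density; the identity \eqref{mardi} will follow by the symmetric argument with the roles of $\mu_1$ and $\mu_2$ interchanged. First, for $f_2=1_B$ with $B\in\mathcal F^{\rm fin}$ and $\mu_2(B)<\infty$, I would compute $V_{\mu_2}(1_B)=W^{(\mu_2)}_B$ directly from \eqref{I-mu}. Theorem \ref{th5-1} together with the notation \eqref{w34} then gives
\[
(V_{\mu_1}^*V_{\mu_2}1_B)(x)=V_{\mu_1}^*(W^{(\mu_2)}_B)(x)=P(x,B),
\]
which coincides with $\int_X P(x,dy)1_B(y)$ under the evident definition of the latter.

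Next, I would extend by linearity to simple functions. Given $f_2=\sum_{j=1}^J c_j 1_{B_j}$ with $B_j\in\mathcal F^{\rm fin}$ of finite $\mu_2$-measure, Lemma \ref{Iowa-city-0502-2022} allows one to assume the $B_j$ are pairwise disjoint, so that $V_{\mu_2}f_2=\sum_j c_j W^{(\mu_2)}_{B_j}$, and linearity of $V_{\mu_1}^*$ together with the indicator case yields
\[
(V_{\mu_1}^*V_{\mu_2}f_2)(x)=\sum_{j=1}^J c_j P(x,B_j),
\]
which is $\int_X P(x,dy)f_2(y)$ for simple integrands. To pass to general $f_2\in\mathbf L^2(X,\mathcal F,\mu_2)$ I would invoke density. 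The operator $V_{\mu_1}^*V_{\mu_2}\colon \mathbf L^2(X,\mathcal F,\mu_2)\to\mathbf L^2(X,\mathcal F,\mu_1)$ is a contraction, since $V_{\mu_2}$ is an isometry by Proposition \ref{prop-4-1} and $V_{\mu_1}^*$ has norm at most one. Simple $\mathcal F^{\rm fin}$-functions being dense in $\mathbf L^2(X,\mathcal F,\mu_2)$, for any approximating sequence $f_2^{(n)}\to f_2$ the vectors $V_{\mu_1}^*V_{\mu_2}f_2^{(n)}$ converge in $\mathbf L^2(X,\mathcal F,\mu_1)$, and the right-hand side $\int_X P(x,dy)f_2(y)$ is defined to be this limit. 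This is consistent with the elementary definition on simple functions, so \eqref{lundi-matin} holds by construction.

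The main conceptual obstacle is not the algebra but precisely this interpretation of $\int_X P(x,dy)f_2(y)$: for fixed $x$, the set function $B\mapsto P(x,B)$ need only be finitely additive on $\mathcal F^{\rm fin}(\mu_2)$ (this is the very situation the generalized Radon–Nikodym Theorem \ref{mainth} addresses), so $P(x,\cdot)$ is \emph{not} a genuine measure and the symbol $P(x,dy)$ has no meaning as a classical kernel. Once one agrees to interpret the integral as the $\mathbf L^2$-closure of its elementary definition on simple functions, as above, the theorem reduces to Theorem \ref{th5-1} plus boundedness of $V_{\mu_1}^*V_{\mu_2}$. Formula \eqref{mardi} then follows verbatim by exchanging the indices $1$ and $2$ and invoking the second equality of Theorem \ref{th5-1} together with the notation \eqref{w45}.
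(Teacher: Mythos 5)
Your proof is correct and follows essentially the same route as the paper: reduce \eqref{lundi-matin} to the indicator case $f_2=1_B$, where it is exactly \eqref{w34} (i.e.\ Theorem \ref{th5-1}), and obtain \eqref{mardi} by exchanging the indices. Your additional remarks on linearity, density, and the fact that $P(x,\cdot)$ is only finitely additive (so the kernel integral must be read through the $\mathbf L^2$-extension) simply make explicit what the paper's terse ``it is enough to prove these formulas with $f_2=1_B$'' leaves implicit.
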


                                                                                                                            \begin{proof}
It is enough to prove these formulas with $f_2=1_B$ in the first case and $f_1=1_A$ in the second case.
\eqref{lundi-matin} reduces then to   \eqref{w34}. Formula \eqref{mardi} follows in a similar way from \eqref{w45}.                                                                  \end{proof}

\begin{theorem}
\label{th5-4}
  (reversibility)
  In the above notations, the following holds:
  \begin{equation}
\int_AP(x,B)\mu_1(dx)=\int_BQ(y,A)\mu_2(dy)=\mathbb E(W_A^{(\mu_1)}W_B^{(\mu_2)}),\quad A,B\in{\mathcal F}^{\rm fin}.
    \end{equation}
  \end{theorem}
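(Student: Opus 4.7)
The plan is to apply the defining property \eqref{KF-deriv-2} of the Krein-Feller derivative to the function $A\mapsto \mathbb E(W_A^{(\mu_1)}W_B^{(\mu_2)})$. First I would check that, for fixed $B\in\mathcal F^{\rm fin}$, this function actually lies in $\mathfrak H(\mu_1)$. Since $\mu_2(B)<\infty$, the random variable $W_B^{(\mu_2)}$ belongs to $\mathbf L^2(\Omega,\mathcal C,\mathbb P)$, and the formula $(T_{\mu_1}\psi)(A)=\mathbb E(\psi W^{(\mu_1)}_A)$ established earlier gives
\[
\bigl(T_{\mu_1}W_B^{(\mu_2)}\bigr)(A)=\mathbb E\bigl(W_A^{(\mu_1)}W_B^{(\mu_2)}\bigr).
\]
Since $T_{\mu_1}$ maps $\mathbf L^2(\Omega,\mathcal C,\mathbb P)$ into $\mathfrak H(\mu_1)$, the function belongs to $\mathfrak H(\mu_1)$, and \eqref{KF-deriv-2} applies to it.

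Next I would unwind the definition of $P(x,B)$. By \eqref{w34} combined with the meaning of \eqref{v-mu-star-2}, $P(\,\cdot\,,B)$ is exactly the Krein-Feller derivative $\nabla_{\mu_1}$ of this element of $\mathfrak H(\mu_1)$. Substituting into the defining identity of Theorem \ref{mainth} then yields
\[
\int_A P(x,B)\,\mu_1(dx)=\int_A\bigl(\nabla_{\mu_1}(T_{\mu_1}W_B^{(\mu_2)})\bigr)(x)\,\mu_1(dx)=\bigl(T_{\mu_1}W_B^{(\mu_2)}\bigr)(A)=\mathbb E\bigl(W_A^{(\mu_1)}W_B^{(\mu_2)}\bigr),
\]
which is the first equality of the theorem. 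The second equality follows by the same argument with the roles of $(\mu_1,A)$ and $(\mu_2,B)$ interchanged, using \eqref{w45} in place of \eqref{w34}; alternatively, one simply observes that $\mathbb E(W_A^{(\mu_1)}W_B^{(\mu_2)})$ is symmetric under this interchange (it is real, being the covariance of two jointly Gaussian real variables).

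I do not expect a real obstacle: the statement is essentially a bookkeeping consequence of how $P$ and $Q$ were defined in \eqref{w34}-\eqref{w45} together with the fundamental identity $M(A)=\int_A(\nabla_\mu M)(x)\mu(dx)$ from Theorem \ref{mainth}. The only point that deserves to be written out with some care is the assertion that $A\mapsto \mathbb E(W_A^{(\mu_1)}W_B^{(\mu_2)})$ lies in $\mathfrak H(\mu_1)$, since it is this membership that legitimizes the invocation of \eqref{KF-deriv-2}; and this membership is immediate from the mapping properties of $T_{\mu_1}$ recorded in the preceding propositions.
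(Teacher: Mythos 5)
Your argument is correct and follows essentially the same route as the paper, which proves the identity by applying \eqref{KF-deriv-2} to the functions $A\mapsto\mathbb E(W_A^{(\mu_1)}W_B^{(\mu_2)})$ and $B\mapsto\mathbb E(W_A^{(\mu_1)}W_B^{(\mu_2)})$, whose Krein--Feller derivatives are $P(\cdot,B)$ and $Q(\cdot,A)$ by \eqref{w34}--\eqref{w45}. Your additional remark that membership in $\mathfrak H(\mu_1)$ follows from the mapping properties of $T_{\mu_1}$ is a welcome piece of bookkeeping that the paper leaves implicit, but it does not change the substance of the argument.
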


  \begin{proof}
This is just an application of \eqref{KF-deriv-2} to the functions $M$ and $N$ defined by $M(A)=N(B)=\mathbb E(W_A^{(\mu_1)}W_B^{(\mu_2)})$.
  \end{proof}

We now study a related Markov property, and begin with a definition. In the statement, and as in Proposition \ref{propcmu}, we denote by
$\mathcal C_\mu$ the sigma-algebra generated by the $\mu$-Brownian motion.  

  \begin{definition}{\rm
Let $\mu_2$ and $\mu_3$ be two sigma-finite positive measures on $(X,\mathcal F)$. We say that the transition $\mu_2\longrightarrow\mu_3$ is
{\sl anticipating} if
\begin{equation}
  \label{ineqC}
  \mathcal C_{\mu_3}\subset\mathcal C_{\mu_2}.
\end{equation}
    }
  \end{definition}
  With $Q_\mu$ as in Remark \ref{danton} we can rewrite \eqref{ineqC} in terms of orthogonal projections as
  \begin{equation}
    \label{st-michel}
    Q_{\mu_3}  =  Q_{\mu_2}    Q_{\mu_3}.
\end{equation}
  We set for $f_2\in\mathbf L^2(X,\mathcal F,\mu_2)$
  \[
(V_1^*V_2f_2)(x)=\int_XP_{1\mapsto 2}(x,dy)f_2(y),\quad \mu_1 \,a.e.
  \]
 see \eqref{lundi-matin}, and similarly for other indices.
  
  \begin{theorem}
    Given three positive sigma-finite measures $\mu_1,\mu_2,\mu_3$ on $X$ the following are equivalent:\\
$(1)$ The transition equation
    \begin{equation}
      \label{transition}
      P_{1\mapsto 3}(x,B)=\int_X  P_{1\mapsto 2}(x,dy)P_{2\mapsto 3}(y,B),\quad B\in\mathcal F^{\rm fin}
    \end{equation}
    holds.\\
$(2)$ $\mu_3$ is anticipating $\mu_2$.
    \end{theorem}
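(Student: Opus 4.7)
The plan is to translate both conditions into operator identities among the isometries $V_{\mu_i}:\mathbf L^2(X,\mathcal F,\mu_i)\to\mathbf L^2(\Omega,\mathcal C,\mathbb P)$ and the orthogonal projections $Q_{\mu_i}=V_{\mu_i}V_{\mu_i}^*$, and then to exploit a rigidity argument for the isometry $V_{\mu_3}$ composed with the projection $Q_{\mu_2}$.

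First I would rewrite condition (2). By Remark \ref{danton} and the reformulation \eqref{st-michel}, anticipation $\mathcal C_{\mu_3}\subset\mathcal C_{\mu_2}$ is equivalent to $Q_{\mu_2}Q_{\mu_3}=Q_{\mu_3}$, i.e. $\operatorname{range}(V_{\mu_3})\subseteq\operatorname{range}(V_{\mu_2})$, and therefore to the single operator identity $Q_{\mu_2}V_{\mu_3}=V_{\mu_3}$. Next I would rewrite condition (1). By Theorem \ref{567}, for every $g\in\mathbf L^2(X,\mathcal F,\mu_j)$,
\[
(V_{\mu_i}^*V_{\mu_j}g)(x)=\int_X P_{i\mapsto j}(x,dy)\,g(y)\qquad\mu_i\text{-a.e.}
\]
Plugging $g=P_{2\mapsto 3}(\cdot,B)=V_{\mu_2}^*V_{\mu_3}1_B\in\mathbf L^2(\mu_2)$ into this identity, the right-hand side of the transition equation \eqref{transition} becomes $(V_{\mu_1}^*V_{\mu_2}V_{\mu_2}^*V_{\mu_3}1_B)(x)=(V_{\mu_1}^*Q_{\mu_2}V_{\mu_3}1_B)(x)$, so (1) is equivalent to $V_{\mu_1}^*V_{\mu_3}1_B=V_{\mu_1}^*Q_{\mu_2}V_{\mu_3}1_B$ for all $B\in\mathcal F^{\rm fin}$. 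By linearity and $\mathbf L^2(\mu_3)$-density of simple functions this in turn is equivalent to the operator identity
\[
V_{\mu_1}^*(I-Q_{\mu_2})V_{\mu_3}=0.
\]

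The implication $(2)\Rightarrow(1)$ is then immediate: if $Q_{\mu_2}V_{\mu_3}=V_{\mu_3}$, then $(I-Q_{\mu_2})V_{\mu_3}=0$ and the displayed identity holds trivially for every auxiliary $\mu_1$.

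The main obstacle is the converse $(1)\Rightarrow(2)$, because the identity $V_{\mu_1}^*(I-Q_{\mu_2})V_{\mu_3}=0$ a priori only asserts that $(I-Q_{\mu_2})V_{\mu_3}f\perp\operatorname{range}(V_{\mu_1})$, which is strictly weaker than vanishing. To close the gap I would exploit that the construction of transition kernels in Theorem \ref{567} applies equally to the triple $(\mu_3,\mu_2,\mu_3)$, so the very same hypothesis gives $V_{\mu_3}^*(I-Q_{\mu_2})V_{\mu_3}=0$. Since $V_{\mu_3}$ is an isometry, $V_{\mu_3}^*V_{\mu_3}=I_{\mathbf L^2(\mu_3)}$, whence $V_{\mu_3}^*Q_{\mu_2}V_{\mu_3}=I$, and for every $f\in\mathbf L^2(X,\mathcal F,\mu_3)$
\[
\|V_{\mu_3}f\|^2=\langle f,V_{\mu_3}^*V_{\mu_3}f\rangle=\langle f,V_{\mu_3}^*Q_{\mu_2}V_{\mu_3}f\rangle=\|Q_{\mu_2}V_{\mu_3}f\|^2.
\]
The orthogonal decomposition $V_{\mu_3}f=Q_{\mu_2}V_{\mu_3}f+(I-Q_{\mu_2})V_{\mu_3}f$ together with Pythagoras then forces $(I-Q_{\mu_2})V_{\mu_3}f=0$, so $Q_{\mu_2}V_{\mu_3}=V_{\mu_3}$, which is (2). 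The crux of the proof is thus the reduction to the triple $(\mu_3,\mu_2,\mu_3)$, after which the isometry-meets-projection rigidity is routine.
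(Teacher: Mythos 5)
Your translation of the two conditions into operator identities is accurate, and your proof of $(2)\Rightarrow(1)$ is essentially the paper's argument: anticipation gives $Q_{\mu_2}V_{\mu_3}=V_{\mu_3}$, hence $(V_{\mu_1}^*V_{\mu_2})(V_{\mu_2}^*V_{\mu_3})=V_{\mu_1}^*Q_{\mu_2}V_{\mu_3}=V_{\mu_1}^*V_{\mu_3}$, which is \eqref{transition} evaluated on indicators.

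The gap is in the converse. You correctly observe that hypothesis $(1)$ only yields $V_{\mu_1}^*(I-Q_{\mu_2})V_{\mu_3}=0$, i.e.\ $(I-Q_{\mu_2})V_{\mu_3}f\perp\operatorname{range}(V_{\mu_1})$, which is weaker than $(I-Q_{\mu_2})V_{\mu_3}=0$; but your way of closing this gap is not legitimate. The assumed transition equation \eqref{transition} is a statement about the specific triple $(\mu_1,\mu_2,\mu_3)$ --- it involves only the kernels $P_{1\mapsto 3}$, $P_{1\mapsto 2}$, $P_{2\mapsto 3}$ --- and it does not give you the analogous identity for the triple $(\mu_3,\mu_2,\mu_3)$, namely $V_{\mu_3}^*(I-Q_{\mu_2})V_{\mu_3}=0$. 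That identity is a different hypothesis, not a consequence of $(1)$: taking $\mu_1=\mu_2$ one has $V_{\mu_1}^*(I-Q_{\mu_2})=V_{\mu_2}^*-V_{\mu_2}^*V_{\mu_2}V_{\mu_2}^*=0$, so $(1)$ holds automatically for \emph{every} $\mu_3$, while $(2)$ fails whenever, say, $\mu_3$ and $\mu_2$ are mutually singular (then $W^{(\mu_2)}$ and $W^{(\mu_3)}$ are independent and $\mathcal C_{\mu_3}\not\subset\mathcal C_{\mu_2}$). So from $(1)$ for the given triple alone no argument can recover $(2)$, and in particular the bootstrap to $(\mu_3,\mu_2,\mu_3)$ --- which you yourself call the crux --- is exactly the step that is unavailable. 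Your isometry-plus-projection rigidity computation ($\|V_{\mu_3}f\|=\|Q_{\mu_2}V_{\mu_3}f\|$ forcing $(I-Q_{\mu_2})V_{\mu_3}f=0$) is correct and would indeed finish the proof if the transition identity were assumed for the triple $(\mu_3,\mu_2,\mu_3)$, or if $\mu_1=\mu_3$; as written, however, it proves a different statement. For comparison, the paper's own treatment of the converse is a one-line assertion (``it is immediate that the converse implication holds''), so your analysis correctly senses that something more is needed there, but the repair you propose does not supply it.
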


\begin{proof}
The Markov property \eqref{transition} follows from the operator identity
  \begin{equation}
    \label{transition-456}
    (V_1^*V_2)(V_2^*V_3)=V_1^*V_3,
  \end{equation}
  which we rewrite as
  \[
  V_1^*Q_2Q_3V_3=V_1^*Q_3V_3.
  \]
It is immediate that the converse implication holds as well.
  \end{proof}

\section{Adjoint of the composition map}
\setcounter{equation}{0}
\label{S-*}
We now go back to the example presented in the introduction. The setting consists of a measure space $(X,\mathcal F)$, an endomorphism $\sigma$ of $X$, and
a sigma-finite measure $\mu$ which is $\sigma$-invariant (see \eqref{sig-inv}). We compute the adjoint of the composition map \eqref{comp-s} using the Krein-Feller
derivative.

\begin{theorem}
The adjoint of the operator $S$ is given by the Krein-Feller derivative of the map
\begin{equation}
  A\mapsto M_g(A)=\int_X 1_A(\sigma(x))g(x)\mu(dx).
  \label{new-g}
\end{equation}
\end{theorem}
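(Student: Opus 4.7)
The plan is to verify that $\nabla_\mu M_g$ is well-defined and then check the adjoint relation on a dense subset. First I would confirm that $M_g \in \mathfrak{H}(\mu)$, so that the Krein--Feller derivative on the right-hand side makes sense. Rewriting $M_g(A) = \int_{\sigma^{-1}(A)} g(x)\mu(dx)$, Cauchy--Schwarz gives
\[
|M_g(A)|^2 \le \mu(\sigma^{-1}(A))\cdot\int_{\sigma^{-1}(A)}|g(x)|^2\mu(dx),
\]
and here the $\sigma$-invariance hypothesis \eqref{sig-inv} enters crucially: $\mu(\sigma^{-1}(A)) = \mu(A)$. Thus for pairwise disjoint $A_1,\ldots,A_N\in\mathcal F^{\rm fin}$, disjointness is preserved under $\sigma^{-1}$, and
\[
\sum_{n=1}^N\frac{|M_g(A_n)|^2}{\mu(A_n)} \le \sum_{n=1}^N\int_{\sigma^{-1}(A_n)}|g|^2\,d\mu \le \|g\|_2^2.
\]
By condition $(2)\Rightarrow(3)$ of Theorem~\ref{mainth}, $M_g\in\mathfrak H(\mu)$ and $\nabla_\mu M_g\in\mathbf L^2(X,\mathcal F,\mu)$ is defined.

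Next I would test the proposed identity $S^*g = \nabla_\mu M_g$ against indicator functions. For $A\in\mathcal F^{\rm fin}$, the defining property \eqref{KF-deriv-2} gives
\[
\langle 1_A,\nabla_\mu M_g\rangle_\mu = \overline{\int_A (\nabla_\mu M_g)(y)\,\mu(dy)} = \overline{M_g(A)} = \int_X 1_A(\sigma(x))\overline{g(x)}\,\mu(dx),
\]
while on the other side
\[
\langle S 1_A,g\rangle_\mu = \int_X 1_A(\sigma(x))\overline{g(x)}\,\mu(dx).
\]
The two expressions coincide, so $\langle Sf,g\rangle_\mu = \langle f,\nabla_\mu M_g\rangle_\mu$ for every $f=1_A$ with $A\in\mathcal F^{\rm fin}$.

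To conclude, I would extend by linearity to simple functions built from sets in $\mathcal F^{\rm fin}$, and then invoke density: since $\mu$ is sigma-finite, the span of $\{1_A: A\in\mathcal F^{\rm fin}\}$ is dense in $\mathbf L^2(X,\mathcal F,\mu)$. Because $S$ is an isometry, in particular bounded, the adjoint relation passes to the closure, yielding $S^*g = \nabla_\mu M_g$ as elements of $\mathbf L^2(X,\mathcal F,\mu)$. The only nontrivial step is the membership $M_g\in\mathfrak H(\mu)$, and this is precisely where the $\sigma$-invariance of $\mu$ is needed; everything else is a routine pairing calculation and a density argument.
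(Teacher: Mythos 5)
Your argument is correct, but it takes a different route from the paper's. The paper's proof starts from the observation that $S$ is isometric (hence bounded), so $h:=S^*g$ already exists in $\mathbf L^2(X,\mathcal F,\mu)$; plugging $f=1_A$ into $\langle Sf,g\rangle_\mu=\langle f,h\rangle_\mu$ and conjugating gives $M_g(A)=\int_A h(x)\mu(dx)$ for all $A\in\mathcal F^{\rm fin}$, which by the easy direction of Theorem \ref{mainth} (essentially Theorem \ref{MAMA-TH}) shows at once that $M_g\in\mathfrak H(\mu)$ and that its Krein--Feller derivative is $h=S^*g$; note that $\sigma$-invariance enters only through the isometry of $S$. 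You instead first establish $M_g\in\mathfrak H(\mu)$ via the quantitative criterion \eqref{new-cond}, using Cauchy--Schwarz, disjointness of the preimages $\sigma^{-1}(A_n)$, and \eqref{sig-inv} directly -- this is exactly the computation the paper records only as a separate consistency-check remark after the theorem -- and you then verify $\langle Sf,g\rangle_\mu=\langle f,\nabla_\mu M_g\rangle_\mu$ on indicators via \eqref{KF-deriv-2} and extend by linearity and density of the span of $\{1_A:A\in\mathcal F^{\rm fin}\}$ in $\mathbf L^2(X,\mathcal F,\mu)$. Your route goes through the harder implication $(2)\Rightarrow(3)$ of Theorem \ref{mainth} and needs the density step, so it is somewhat longer, but it has the merit of exhibiting explicitly why $M_g$ satisfies the defining estimate and of not presupposing that the adjoint has been identified; the paper's route is shorter because the existence of $S^*g$ in $\mathbf L^2$ is automatic from boundedness, after which the identification is a one-line pairing. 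Two small points to tidy in your write-up: in the sum \eqref{new-cond} you should either discard or separately handle sets with $\mu(A_n)=0$ (for which $M_g(A_n)=0$ as well), and the bound is $\|g\|_2^2$, not $\|g\|_2$ (the same slip appears in the paper's remark).
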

    
\begin{proof}
  Let $f\in\mathbf L^2(X,\mathcal F,\mu)$. The composition map $S$ is isometric and therefore there exists $h\in\mathbf L^2(X,\mathcal F,\mu)$ such that
      \begin{equation}
        \int_X f(\sigma(x))\overline{g(x)}\mu(dx)=\int_Xf(x)\overline{h(x)}\mu(dx).
        \label{cid-05-02-2022}
      \end{equation}
      Taking conjugate and setting $f=1_A$ with $A\in\mathcal F^{\rm fin}$ we have:
      \begin{equation}
\int_X 1_A(\sigma(x))g(x)\mu(dx)=\int_Ah(x)\mu(dx).
\end{equation}
If follows from Theorem \ref{mainth} that the function
\eqref{new-g} belongs to $\mathfrak H(\mu)$ and has Krein-Feller derivative $h$.
      \end{proof}

        \begin{remark}{\rm
            In an informal way one sometimes uses the notation $(g\mu)\circ\sigma^{-1}$ for the map $M_g$, and the adjoint is given by the formula
            \begin{equation}
              S^*g=\nabla_\mu \left((g\mu)\circ\sigma^{-1}\right).
            \end{equation}
            }
          \end{remark}
      
      \begin{remark}{\rm We now check directly that the map \eqref{new-g} satisfies \eqref{new-cond}. To that purpose, let $N\in\mathbb N$ and
          let $A_1,\ldots, A_N$ be non-intersecting elements of $\mathcal F^{\rm fin}$. We note that
          \[
            1_A(\sigma(x))=1_{\sigma^{-1}(A)}(x)
          \]
          and rewrite $M_g$ as $M_g(A)=\int_X1_{\sigma^{-1}(A)}(1_{\sigma^{-1}(A)}g(x))\mu(dx)$.
          We then obtain from Cauchy-Schwarz inequality
          \[
            \begin{split}
              |M_g(A_n)|^2&\le\mu(\sigma^{-1}(A_n))\int_{\sigma^{-1}(A_n)}|g(x)|^2\mu(dx)\\
              &=\mu(A_n)\int_{\sigma^{-1}(A_n)}|g(x)|^2\mu(dx)
              \end{split}
                        \]
                        since $\mu$ is $\sigma$-invariant. Furthermore the sets $\sigma^{-1}(A_n)$ are disjoints since the $A_n$ are pairwise disjoints.
                        Hence
                        \[
                          \begin{split}
                            \sum_{n=1}^N\frac{|M_g(A_n)|^2}{\mu(A_n)}&\le\int_{\sigma^{-1}(A_n)}\sum_{n=1}^N\frac{\mu(A_n)}{\mu(A_n)}|g(x)|^2\mu(dx)\\
                            &\le\int_{\cup_{n=1}^N\sigma^{-1}(A_n)}|g(x)|^2\mu(dx)\\
                            &\le \|g\|_2.
                          \end{split}
                          \]
        }
      \end{remark}

\section*{Acknowledgments}
Daniel Alpay thanks the Foster G. and Mary McGaw Professorship in Mathematical Sciences, which supported this research.

\def\cprime{$'$} \def\cprime{$'$} \def\cprime{$'$}
  \def\lfhook#1{\setbox0=\hbox{#1}{\ooalign{\hidewidth
  \lower1.5ex\hbox{'}\hidewidth\crcr\unhbox0}}} \def\cprime{$'$}
  \def\cprime{$'$} \def\cprime{$'$} \def\cprime{$'$} \def\cprime{$'$}
  \def\cprime{$'$}

\end{document}